\DeclarePairedDelimiter\floor{\lfloor}{\rfloor}
\DeclareMathAlphabet{\mathcal}{OMS}{cmsy}{m}{n}
\newcounter{counter} \numberwithin{counter}{section}
\newtheorem{theorem}[counter]{Theorem}
\newtheorem{proposition}[counter]{Proposition}
\newtheorem{lemma}[counter]{Lemma}
\newtheorem{corollary}[counter]{Corollary}
\newtheorem{definition}[counter]{Definition}
\newtheorem{example}[counter]{Example}
\newtheorem{construction}[counter]{Construction}
\newcommand\numberthis{\addtocounter{equation}{1}\tag{\theequation}}
\numberwithin{equation}{section}
\newcommand{\adj}{\text{adj}}
\newcommand{\phimap}[1]{\varphi_{#1}}
\newcommand{\R}{\mathbb{R}}
\newcommand{\C}{\mathbb{C}}
\newcommand{\I}{\mathcal{I}\left(S\right)_{n-1}}
\newcommand{\It}{\mathcal{I}\left(\tilde{S}\right)_{n-1}}
\newcommand{\phieig}[1]{\Lambda\left( #1 \right)}
\newcommand{\E}[1]{\Lambda\left( #1 \right)_{n-1}}
\newcommand{\Ek}[1]{\Lambda\left( #1 \right)_{k}}
\newcommand{\F}[1]{\Lambda\left( #1 \right)_{1}}
\newcommand{\Equartic}[1]{\Lambda\left( #1 \right)_{3}}
\newcommand{\Equintic}[1]{\Lambda\left( #1 \right)_{4}}
\newcommand{\El}{\Lambda \left(\omega^{\ell}\right)_{n-1}}
\newcommand{\cpts}{\mathcal{V}_{\mathbb{C}}\big(f,\frac{\partial f}{\partial t}\big)}
\newcommand{\rpts}{\mathcal{V}_{\mathbb{R}}\big(f,\frac{\partial f}{\partial t}\big)}
\title{Determinantal representations of invariant hyperbolic plane curves}
\author{Konstantinos Lentzos and Lillian Pasley}
\begin{document}



\date{}
\maketitle

\bibliographystyle{abbrv}




\begin{abstract}
We study hyperbolic polynomials with nice symmetry and express them as the determinant of a Hermitian matrix with special structure. The goal of this paper is to answer a question posed by Chien and Nakazato in 2015. By properly modifying a determinantal representation construction of Dixon (1902), we show for every hyperbolic polynomial of degree $n$ invariant under the cyclic group of order $n$ there exists a determinantal representation admitted via some cyclic weighted shift matrix. Moreover, if the polynomial is invariant under the action of the dihedral group of order $n$, the associated cyclic weighted shift matrix is unitarily equivalent to one with real entries.
\end{abstract}



\section{Introduction} \label{intro}

    Let $f$ be a real homogeneous polynomial of degree $n$ in three variables $t,x,y$, so $\mathcal{V}_{\mathbb{C}}(f)$ is a projective plane curve. A determinantal representation of $f$ is an expression
\[ f = \det(t M_0 + xM_1 + y M_2), \]
where $M_0, M_1, M_2$ are $n\times n$ matrices.
We set $M=M(t,x,y) = tM_0+xM_1+yM_2$ and refer to $M$ as the determinantal representation of $f$.
The representation is called real symmetric or Hermitian if $M$ is of the respective form.
Real symmetric and Hermitian determinantal representations have been systematically studied by Dubrovin \cite{Dub83} and Vinnikov \cite{Vinnikov89, Vinnikov93} in the late 1980's and early 1990's. 
Definite Hermitian determinantal representations are those for which there exists a point $e=(e_0,e_1,e_2) \in \mathbb{R}^3$ such that the 
matrix $M(e) = e_0 M_0 + e_1 M_1 + M_2 e_2$ is positive definite. Since the eigenvalues of a Hermitian matrix are real, every real line passing
through $e$ meets the hypersurface $\mathcal{V}_{\mathbb{C}}(f)$ in only real points. Polynomials with this property are called hyperbolic
and are intimately linked with convex optimization, see for example \cite{Bau98}, \cite{Guler97} and \cite{Renegar2006}.

\begin{definition}
 A homogeneous polynomial $f\in \mathbb{R}[t,x,y]_n$ is called \emph{hyperbolic with respect to a point $e\in\mathbb{R}^3$} if $f(e)\neq 0$ 
 and for every $z\in\mathbb{R}^3$, all roots of the univariate polynomial $f(e+\lambda z)\in\mathbb{R}[\lambda]$ are real.
\end{definition}

Hyperbolicity is reflected in the topology of $\mathcal{V}_{\mathbb{R}}(f)$ in the real projective plane $\mathbb{P}^2(\R)$. In particular, if $\mathcal{V}_{\mathbb{C}}(f)$ is smooth, 
$f$ is hyperbolic if and only if $\mathcal{V}_{\mathbb{R}}(f)$ consists of   $\floor{\frac{n}{2}}$ nested ovals, as well as a pseudo-line if $n$ is odd.
Lax conjectured, in the context of hyperbolic differential operators, that every hyperbolic polynomial possesses a definite determinantal
representation with real symmetric matrices  \cite{Lax58}. Helton and Vinnikov \cite{HV2007} proved this conjecture in 2007, while Plaumann and Vinzant \cite{PlaumannVinzant} gave a concrete construction
for the Hermitian case in 2013.

    \begin{definition}
          The {\rm{numerical range}} of $A \in \C^{n \times n}$ is $\mathcal{W}(A) := \{x^{\ast} A \hspace{.5mm} x\in \C \mid x \in \C^n, \hspace{1mm} x^{\ast}x = 1\}\text{.}$ 
    \end{definition}
    
 The numerical range is compact and convex in $\C$, or equivalently, $\R^2$ \cite{haus,toep} and invariant under unitary transformation \cite{kippenhahn}. That is, for any $B = UAU^{\ast}$ where $UU^{\ast} = I$, the equality $\mathcal{W}(A) = \mathcal{W}(B)$ holds. Geometrically, this set is an affine projection of the semidefinite cone \cite{henrion}.

    \begin{definition}
        For any matrix $A \in \C^{n \times n}$, let $$\label{rep} f_{A}(t,x,y) := \det(tI_n + x \Re(A) + y \Im(A))$$
where $\Re(A)=\tfrac{A+A^{\ast}}{2}$ and $\Im(A)=\tfrac{A-A^{\ast}}{2i}$. The dual of the algebraic curve in $\mathbb{P}^2(\C)$ defined by the zero set of $f_A$ is called the {\rm{boundary generating curve}} of $\mathcal{W}(A)$.
    \end{definition}

            \begin{figure}\vspace{-25mm}
            \includegraphics[scale=0.35]{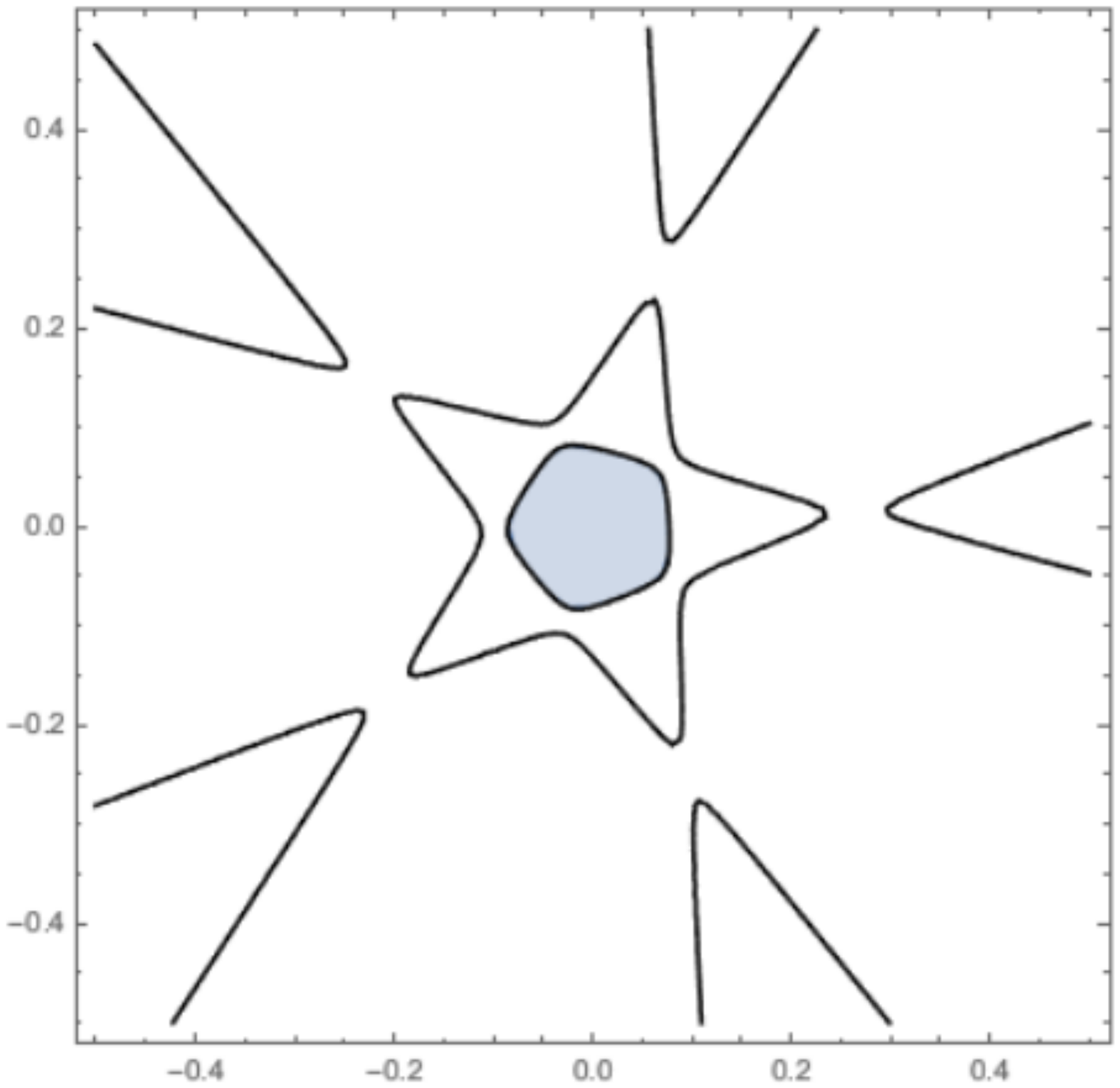}\centering
            \includegraphics[scale=0.35]{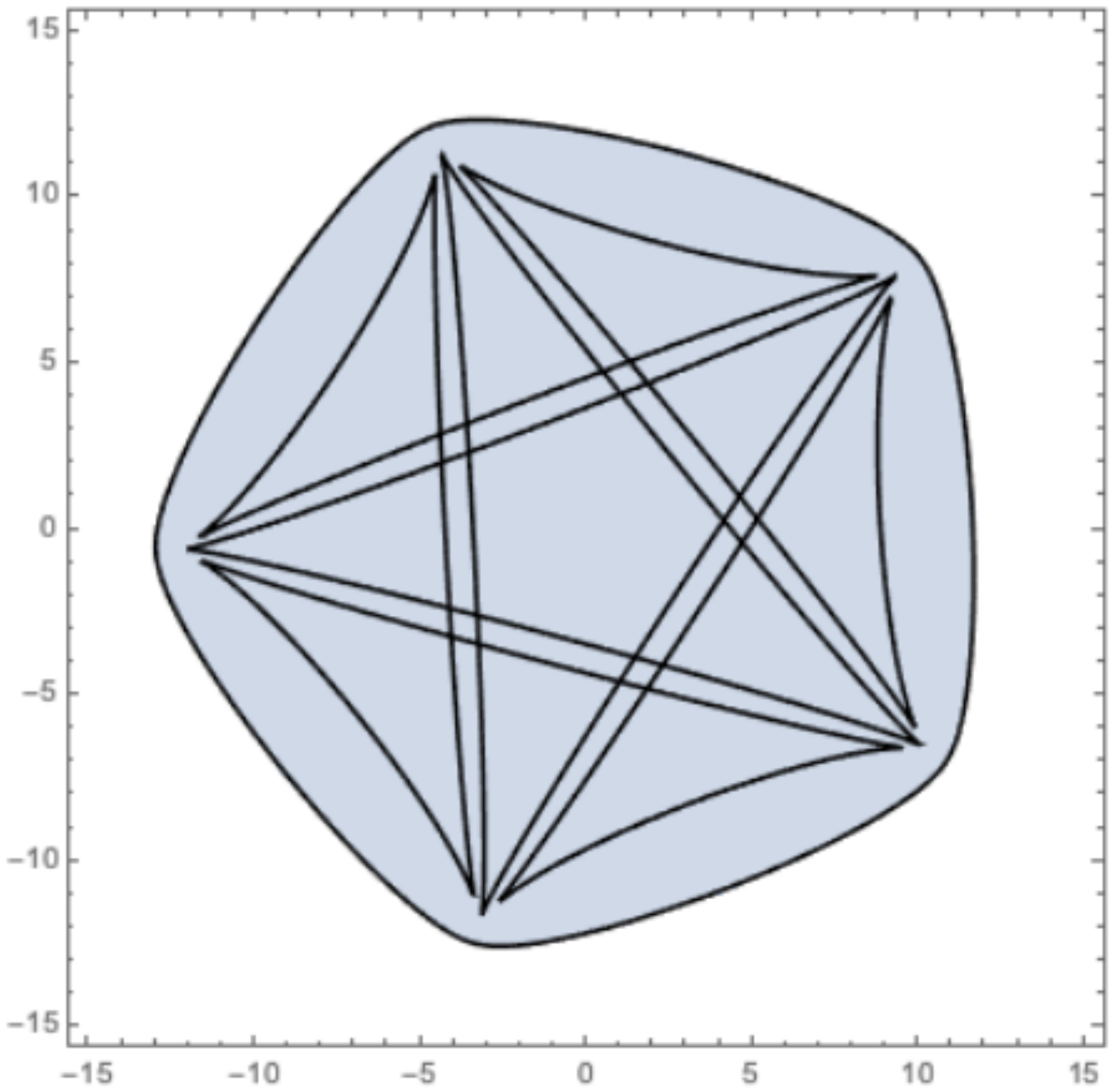}\centering 
            \vspace{-25mm}
            \caption{Hyperbolic dihedral invariant quintic $f_A$ associated to $A = S(10+5i, 2+10i, 14, 16i, 12)$ in the hyperplane $\{t=1\}$ with connected component containing origin shaded (left). The dual curve of $f_A$ with shaded convex hull corresponding to $\mathcal{W}(A)$ (right).}
        \end{figure}


The convex hull of the boundary generating curve is exactly $\mathcal{W}(A)$ \cite{kippenhahn}.  It is shown in \cite{Chien2013} that $f_A$ associated with cyclic weighted shift matrix $A \in \mathbb{R}^{n \times n}$ is hyperbolic with respect to $(1,0,0)$ and is invariant under the action of the dihedral group $D_n$.

\begin{definition}
 A cyclic weighted shift matrix $A=S(a_1,\dots, a_n) \in \mathbb{C}^{n \times n}$ has the form 
   \[A = S(a_1,\dots,a_n) = \left( \begin{array}{cccccc}
0      & a_1     & 0      & 0      & \cdots  & 0       \\
0      & 0       & a_2    & 0      & \cdots  & 0       \\
\vdots & \vdots  & \ddots & \ddots & \ddots  & \vdots  \\
\vdots & \vdots  & \vdots & \ddots & \ddots  & 0       \\
0      & 0       & \vdots & \vdots & \ddots  & a_{n-1} \\
a_{n}  & 0       & \cdots & \cdots & \cdots  & 0       \\
\end{array} \right)\text{.}\]
\end{definition}

Chien and Nakazato posed the converse problem and were interested in its connection to numerical ranges \cite{Chien2015}. In view of \cite{PlaumannVinzant}, we prove that a hyperbolic plane curve, invariant under the action of the cyclic group $C_n$, has a determinantal representation admitted by some cyclic weighted shift matrix with complex entries. Furthermore, we show if the plane curve is invariant under the action of the dihedral group $D_n$ that we can recover real entries in the associated cyclic weighted shift matrix. Now we discuss the action of the cyclic and dihedral groups, introduce our main theorem, and establish a connection between $f_A$ and $\mathcal{W}(A)$.

    Let $f \in\mathbb{R}[t,x,y]$ and define
$\Phi = \left(\hspace{1mm} \begin{smallmatrix}
                1      & 0                    & 0             \\
                0      & \cos\left(\frac{2 \pi}{n}\right)          & -\sin\left(\frac{2 \pi}{n}\right)  \vspace{1mm} \\
                0      & \sin\left(\frac{2 \pi}{n}\right)           & \cos\left(\frac{2 \pi}{n}\right)
\end{smallmatrix} \right)$ and  $\Gamma = \left( \begin{matrix}
1      & 0                    & 0             \\
0      & 1           & 0   \\
0      & 0           & -1  
\end{matrix} \right)$.
The cyclic group $C_n$ and dihedral group $D_n$ can be expressed as
            \begin{align}\label{rep1}
                C_{n} = \langle \Phi \rangle\text{ and }
                D_{n} = \langle \Phi, \Gamma \rangle 
            \end{align} 
where $\Phi$ is the rotation around the axis of $t$ by the angle $\frac{2\pi}{n}$ and $\Gamma$ is the reflection over the $tx$-plane. These groups describe the rotations --- and, in the dihedral case, reflections --- of a regular polygon with $n$ sides. If $f(\Xi(t,x,y))~=~f(t,x,y)$ for every generator $\Xi$ of group $G$, then we write
    \begin{equation} 
        f~\in~\R[t,x,y]^G
    \end{equation}
    and say $f$ is \textit{invariant under the action of $G$}. We provide an alternative proof for a result of Chien and Nakazato \cite{Chien2013}.
         

        \begin{proposition} \label{cyc}
    Let $A=S(a_1,\dots,a_n) \in \mathbb{C}^{n \times n}$ be a cyclic weighted shift matrix. Then $f_A$ is hyperbolic with respect to $e=(1,0,0)$ and invariant under the action of the cyclic group of order $n$. Moreover, if $A \in \R^{n \times n}$, then $f_A$ is invariant under the action of the dihedral group of order $n$.
        \end{proposition}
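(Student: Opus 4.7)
My plan is to split the proposition into three independent verifications: hyperbolicity with respect to $e = (1,0,0)$, $C_n$-invariance, and (in the real case) $D_n$-invariance. Hyperbolicity is immediate from the Hermitian structure of $\Re(A)$ and $\Im(A)$. The heart of the argument will be $C_n$-invariance, which I would reduce to exhibiting a diagonal unitary that conjugates $A$ to $e^{-2\pi i/n}A$. Dihedral invariance will then follow from a brief transpose calculation.

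For hyperbolicity, I would note $f_A(1,0,0) = \det(I_n) \neq 0$ and observe that for any $z = (z_0,z_1,z_2) \in \R^3$, the univariate polynomial $f_A(e+\lambda z) = \det\bigl((1+\lambda z_0)I_n + \lambda(z_1\Re(A)+z_2\Im(A))\bigr)$ factors through the real spectrum of the Hermitian pencil $z_1\Re(A) + z_2\Im(A)$, so its roots in $\lambda$ are automatically real.

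For $C_n$-invariance, the first step would be the algebraic identities
\[
\Re(e^{-i\theta}A) = \cos\theta\,\Re(A) + \sin\theta\,\Im(A), \qquad \Im(e^{-i\theta}A) = -\sin\theta\,\Re(A) + \cos\theta\,\Im(A),
\]
which immediately give $f_A \circ \Phi = f_{e^{-i\theta}A}$ for $\theta = 2\pi/n$. The key remaining step — and the one I expect to be the main obstacle — is to show that $e^{-i\theta}A$ is unitarily similar to $A$. I would propose $D = \mathrm{diag}(1,\omega,\omega^2,\ldots,\omega^{n-1})$ with $\omega = e^{i\theta}$, note that $(DAD^{-1})_{ij} = \omega^{i-j}A_{ij}$, and observe that this multiplies every superdiagonal entry $a_i$ by $\omega^{-1}$, while the corner entry $a_n$ picks up $\omega^{n-1}$, which equals $\omega^{-1}$ precisely because $\omega^n = 1$. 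Hence $DAD^{-1} = e^{-i\theta}A$, and since $D$ is unitary, conjugation commutes with taking $\Re$ and $\Im$, so $f_{DAD^{-1}} = f_A$ by invariance of determinants under similarity. Matching the corner entry is the spot where $n$-fold symmetry meets matrix size, and it is exactly what forces $\omega$ to be an $n$-th root of unity.

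For $D_n$-invariance, assume $A \in \R^{n\times n}$. Then $\Re(A)$ is real symmetric while $\Im(A) = (A-A^T)/(2i)$ has purely imaginary off-diagonal entries and satisfies $\Im(A)^T = -\Im(A)$. Applying $\det(M) = \det(M^T)$ to $M = tI_n + x\Re(A) + y\Im(A)$ yields
\[
f_A(t,x,y) = \det\bigl(tI_n + x\Re(A) - y\Im(A)\bigr) = f_A(\Gamma(t,x,y)),
\]
and combining with the $C_n$-invariance already established completes the proof.
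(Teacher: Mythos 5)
Your proof is correct and follows essentially the same route as the paper: hyperbolicity from the Hermitian pencil, $C_n$-invariance via conjugation by the diagonal unitary $\mathrm{diag}(1,\omega,\ldots,\omega^{n-1})$, and $D_n$-invariance via transposition. The only cosmetic difference is that you stay in $(x,y)$ coordinates and phrase the key step as $e^{-2\pi i/n}A$ being unitarily similar to $A$, whereas the paper rewrites $f_A$ in terms of the coefficients of $A$ and $A^{\ast}$ (i.e.\ in $u=x+iy,\,v=x-iy$ form) and applies the same conjugation directly inside the determinant.
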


\begin{proof}
    Notice $f_A(e)=\det({I_n})=1\neq 0$ and $$f_A(e+\lambda z)=\det\left(I+\lambda\left(z_1I + \frac{1}{2}(z_2-iz_3)A + \frac{1}{2}(z_2+iz_3)A^{\ast}\right)\right)$$ is the characteristic polynomial of a Hermitian matrix, which has all real eigenvalues. Therefore, $f_A$ is hyperbolic with respect to the point $e=(1,0,0)$. Write $$f_A(t,x,y) = \det\left(tI_n+\left(\frac{x+iy}{2}\right)A^{\ast}+\left(\frac{x-iy}{2}\right)A\right)$$ and let $\Omega = \text{diag}(1,\omega,\dots,\omega^{n-1})$ for $\omega=e^{\frac{2\pi i}{n}}$. Then
    \begin{align*}
    f_A\big(\Phi(t,x, y)\big) &= \det\left(tI_n + \omega \left(\frac{x+iy}{2}\right) A^{\ast} + \omega^{n-1}\left(\frac{x-iy}{2}\right)A\right) \\
    &=\det\left(\Omega\left(tI_n + \frac{x+iy}{2} A^{\ast} + \frac{x-iy}{2}A\right)\Omega^{\ast}\right) \\
    &=\det(\Omega)f_A(t,x,y)\det(\Omega^{\ast}) \\
    &= f_A(t,x,y)
    \end{align*}
    so $f_A$ is invariant under the action of rotation and $f_A \in \R[t,x,y]_n^{C_n}$.
    Now assume $A \in \R^{n \times n}$, so $A^{\ast} = A^T$. Then
  \begin{align*}
  f_A\left(\Gamma(t,x,y)\right) &= \det\left(tI_n+\frac{x-iy}{2}A^{T} + \frac{x+iy}{2} A\right)  \\
    &= \det\left(\left(tI_n+\frac{x+iy}{2}A^T + \frac{x-iy}{2}A\right)^T\right) =f_A(t,x,y)
    \end{align*}
    so $f_A$ is also invariant under the action of reflection and $f_A \in \R[t,x,y]_n^{D_n}$.
\end{proof}

Chien and Nakazato were naturally interested in asking the inverse question. Given a curve with dihedral (or cyclic) invariance, can we always find an associated cyclic weighted shift matrix? If so, is there one with all real entries? The main theorem of our paper gives this question a positive answer.

\begin{theorem}\label{main}
Let $f \in\mathbb{R}[t,x,y]_{n}$ be hyperbolic with repect to $(1,0,0)$ with $f(1,0,0)=1$. 
\begin{itemize}
    \item[a.] If $f \in \R[t,x,y]^{C_n}_n$, then
there exists cyclic weighted shift matrix $A\in\C^{n \times n}$ such that
            $f = f_A$. 
        \item[b.] If $f \in \R[t,x,y]^{D_n}_n$, then there exists cyclic weighted shift matrix $B \in \R^{n \times n}$ such that $f = f_B$.
    \end{itemize}
\end{theorem}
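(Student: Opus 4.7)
The plan is to first produce a definite Hermitian determinantal representation of $f$ and then exploit the cyclic symmetry to conjugate it into a basis in which the associated matrix becomes a cyclic weighted shift. By Helton--Vinnikov (or the more explicit Plaumann--Vinzant construction, which the authors modify in the style of Dixon), there exist Hermitian $H_1, H_2 \in \mathbb{C}^{n \times n}$ with $f = \det(tI_n + xH_1 + yH_2)$; the normalization $f(1,0,0) = 1$ forces the coefficient of $t$ to be $I_n$. Setting $A := H_1 + iH_2$ gives $f = f_A$, so it suffices to find a unitary $U$ for which $UAU^{\ast}$ has cyclic weighted shift form.

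The $C_n$-invariance of $f$, together with uniqueness up to unitary conjugation of the definite Hermitian representation attached to a fixed theta characteristic, yields a unitary $V$ with $VAV^{\ast} = \omega^{-1} A$, where $\omega = e^{2\pi i/n}$; this is precisely the reverse of the calculation in the proof of Proposition \ref{cyc}. Iterating shows that $V^n$ commutes with both $A$ and $A^{\ast}$, hence with $H_1$ and $H_2$; by irreducibility $V^n$ is scalar, so after rescaling we may assume $V^n = I_n$. Decomposing $\mathbb{C}^n = \bigoplus_{k=0}^{n-1} E_k$ into the eigenspaces $E_k = \ker(V - \omega^k I_n)$, the relation $VAV^{\ast} = \omega^{-1}A$ forces $A(E_k) \subseteq E_{k-1}$ with indices taken modulo $n$. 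Once each $E_k$ is one-dimensional, ordering the basis so that $V = \Omega = \operatorname{diag}(1, \omega, \ldots, \omega^{n-1})$ places the only nonzero entries of $A$ in positions $(i, i+1)$ and $(n, 1)$, which is exactly the cyclic weighted shift form.

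The main obstacle is establishing that each eigenspace $E_k$ is one-dimensional, equivalently that $V$ implements the regular representation of $C_n$ on $\mathbb{C}^n$. To secure this I would turn to Dixon's construction directly: for smooth irreducible $f$, pick a $C_n$-invariant theta characteristic $\theta$ on $\mathcal{V}_{\mathbb{C}}(f)$, so that $C_n$ acts on the $n$-dimensional space $H^0(\theta(1))$, and use a character computation---relying on the eigenvalues of $\Phi$ on the ambient $H^0(\mathcal{O}_{\mathbb{P}^2}(1))$ together with hyperbolicity---to pin down this action as the regular representation. A weight basis for $H^0(\theta(1))$ then realizes $V$ as $\Omega$ from the outset, and the corresponding Dixon matrix has the cyclic weighted shift form automatically. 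Singular or reducible $f$ would be handled by perturbing within the $C_n$-invariant hyperbolic locus and taking a limit of cyclic weighted shift matrices.

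For part (b) I would incorporate the reflection $\Gamma$: invariance under $\Gamma$ supplies a second unitary $W$ that intertwines $A$ with $A^{\ast}$ in a manner compatible with $V$, and the pair $(V, W)$ satisfies, after a scalar rescaling of $W$, the defining relations of $D_n$. In the eigenbasis of $V$, these relations combined with a further block-diagonal unitary conjugation commuting with $V$---which preserves the cyclic weighted shift form---force $W$ to act as entrywise complex conjugation. The weights $a_i$ of the cyclic weighted shift matrix $B = UAU^{\ast}$ are then fixed by this conjugation, hence real, yielding the desired matrix in $\mathbb{R}^{n \times n}$.
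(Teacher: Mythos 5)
Your strategy is genuinely different from the paper's, and it is an appealing one, but it contains a gap at precisely the point you flag as ``the main obstacle,'' and that gap is not small.

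Your plan is to start from an arbitrary definite Hermitian representation $f=\det(tI+xH_1+yH_2)$ and then conjugate it into shift form using a unitary $V$ with $VAV^{\ast}=\omega^{-1}A$. The existence of such a $V$ already requires that the line bundle (``theta characteristic'') attached to your chosen representation be $\Phi^{\ast}$-invariant; otherwise $M$ and $M\circ\Phi$, while both valid representations of $f$, need not be unitarily equivalent. You address this by proposing to ``pick a $C_n$-invariant theta characteristic,'' but the existence of such a bundle that \emph{also} yields a \emph{definite Hermitian} representation is not established; the definite ones form a proper subset, and nothing forces that subset to contain a $\Phi^{\ast}$-fixed point. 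Even granting that, your argument requires that the eigenspaces $E_k=\ker(V-\omega^kI)$ are each one-dimensional, i.e. that $V$ realizes the regular representation of $C_n$. Your proposed remedy --- ``use a character computation $\ldots$ to pin down this action as the regular representation'' --- is exactly the missing step, and you do not carry it out; without it, nothing rules out, say, $\dim E_0=2$ with some $E_k=0$, in which case $A$ does not have cyclic weighted shift form in the eigenbasis of $V$. The paper sidesteps this entirely by working in the polynomial ring: Proposition~\ref{dimVL} produces, for each $\ell$, an eigenvector of $\phimap{n-1}$ vanishing on the chosen orbit set $S$, and Lemma~\ref{symmofg} (a $C_n$-equivariant refinement of Max Noether's theorem) lets one fill in the rest of the adjugate matrix $G$ so that $G_{ij}\in\E{\omega^{i-j}}$ by construction. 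The banded shift structure of $M=\adj(G)/f^{n-2}$ then falls out of the computation $\phimap{}(M)=\Omega M\Omega^{\ast}$ with no need to classify a group representation after the fact.

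Your treatment of part (b) is also considerably heavier than what is actually needed. The paper's proof is an elementary observation: evaluating at $(t,u,v)=(t,0,2)$ shows $a_1\cdots a_n\in\R$, and one then conjugates by a diagonal unitary $U=\operatorname{diag}(e^{i\theta_1},\ldots,e^{i\theta_n})$ chosen so that the phases telescope away, which preserves the shift form and makes all weights real. Your route through a second intertwiner $W$ satisfying the $D_n$ relations and ``acting as entrywise complex conjugation'' would require the same unestablished structural control over the representation as part~(a), and in particular does not reduce to anything as clean as the product-of-weights observation. In summary: the conceptual picture you sketch (regular $C_n$-representation on the target space $\Rightarrow$ shift form) is attractive, but the two load-bearing claims --- existence of an invariant definite Hermitian representation, and the regularity of the resulting $C_n$-action --- are asserted rather than proved, and the paper's explicit eigenspace-aware Dixon construction is precisely the machinery that supplies them.
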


The example in Section 3 of \cite{Chien2015} shows there exists matrix $A$ so $f_A \in \C[t,x,y]^{C_n}_n$ and $A$ is not unitarily equivalent to any cyclic weighted shift matrix (with positive weights). Theorem \ref{main} proves there must exist some cyclic weighted shift matrix with the same numerical range of $A$, even if they are not unitarily equivalent.
\begin{corollary}
    If $f \in \C[t,x,y]^{C_n}_n$, then for any matrix $A$ such that $f = f_A$, there exists cyclic weighted shift matrix $B$ with $\mathcal{W}(A) = \mathcal{W}(B)$.
\end{corollary}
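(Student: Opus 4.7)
The plan is to combine Theorem \ref{main}(a) with Kippenhahn's characterization of the numerical range as the convex hull of the boundary generating curve; almost no new work is required.

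First I would check that the hypotheses of Theorem \ref{main}(a) are automatically satisfied. For any $A\in\C^{n\times n}$, the matrices $\Re(A)$ and $\Im(A)$ are Hermitian, so the pencil $t I_n + x\Re(A) + y\Im(A)$ has real characteristic polynomial; hence $f=f_A$ lies in $\R[t,x,y]_n$, and in particular $C_n$-invariance of $f$ as a complex polynomial upgrades to $C_n$-invariance as a real polynomial, i.e.\ $f\in\R[t,x,y]^{C_n}_n$. The normalization $f(1,0,0) = \det(I_n) = 1$ is immediate, and hyperbolicity of $f$ with respect to $(1,0,0)$ follows because for any $z\in\R^3$ the univariate polynomial $f(e+\lambda z)$ is the characteristic polynomial of the Hermitian matrix $-\lambda\bigl(z_1 I_n + z_2 \Re(A) + z_3 \Im(A)\bigr)$ shifted by $1$, whose roots are all real --- the same argument used in the proof of Proposition \ref{cyc}.

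With those hypotheses in hand, Theorem \ref{main}(a) produces a cyclic weighted shift matrix $B\in\C^{n\times n}$ with $f_B = f = f_A$. To conclude I would invoke the definition of the boundary generating curve (Definition 1.3) together with Kippenhahn's theorem: $\mathcal{W}(A)$ equals the convex hull of the boundary generating curve of $\mathcal{W}(A)$, which is itself determined by the projective curve $\mathcal{V}_{\C}(f_A)$ via projective duality. The identical statement holds for $B$. Since $f_A = f_B$, the two boundary generating curves coincide, so their convex hulls coincide, giving $\mathcal{W}(A) = \mathcal{W}(B)$.

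There is no real obstacle here --- the corollary is essentially a reformulation of Theorem \ref{main}(a) on the level of numerical ranges. The only subtle point worth pointing out in the write-up is the one already mentioned: even though the statement permits $f\in\C[t,x,y]$, the assumption $f=f_A$ forces $f$ to have real coefficients, which is what lets Theorem \ref{main}(a) apply.
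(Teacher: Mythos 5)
Your proof is correct and follows essentially the same route as the paper: apply Theorem \ref{main}(a) to obtain a cyclic weighted shift matrix $B$ with $f_B = f_A$, then invoke Kippenhahn's theorem that $\mathcal{W}(A)$ is the convex hull of the boundary generating curve determined by $f_A$. You are slightly more careful than the paper in explicitly verifying that the hypotheses of Theorem \ref{main}(a) (reality, hyperbolicity, and normalization of $f_A$) hold automatically, but this is the same content as Proposition \ref{cyc} and the paper leaves it implicit.
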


    \begin{proof}
        By Theorem \ref{main}, there exists cyclic weighted shift matrix $B$ so that $f_A=f_B$. Then by \cite{kippenhahn}, the boundary generating curves of $\mathcal{W}(A)$ and $\mathcal{W}(B)$ are the same and the equality holds.
    \end{proof}

The rest of the paper is organized as follows. We present a useful change of variables in Section \ref{cov} and calculate the number of polynomial invariants that generate $\C[t,x,y]_n^{C_n}$. In Section~\ref{prereq}, we establish helpful facts about the polynomial $f$ under specified group actions. We give a proof of Theorem \ref{main}(a) for $\mathcal{V}_{\C}(f)$ smooth based on a construction due to Dixon \cite{Dixon} in Section 3. We then extend this result to any curve with cyclic invariance in Section \ref{singular} as we consider curves with $\mathcal{V}_{\C}(f)$ singular. In Section \ref{dihedral}, we prove Theorem \ref{main}(b) and summarize our work and discuss further generalizations in the last section.

\section{Polynomial Invariants and a Change of Variables} \label{cov}

Here we introduce a change of variables and discuss the resulting actions of $\Phi$ and $\Gamma$ under this map. Then we precisely define elements of $\C[t,u,v]_n^{C_n}$.  First let 
    \begin{equation}
\text{conj} : [t:x:y] \mapsto [\overline{t}:\overline{x}:\overline{y}]
    \end{equation} denote the action of conjugation. Consider the change of variables given by the map
    \begin{align*}
       \xi: \mathbb{P}^2(\mathbb{C}) &\to \mathbb{P}^2(\mathbb{C})\text{, }[t:x:y] \mapsto [t:x+iy:x-iy]=[t:u:v]  \numberthis \\
       \xi^{-1}: \mathbb{P}^2(\mathbb{C}) &\to \mathbb{P}^2(\mathbb{C})\text{, }[t:u:v] \mapsto \Big[t:\frac{u+v}{2}:\frac{u-v}{2i}\Big]\text{.}
    \end{align*}
Notice $u \neq \overline{v}$ when $x,y \in \C\backslash\R$, and the action of conjugation is
\begin{equation}
    \text{conj} : [t:u:v] \mapsto \big[\overline{t}:\overline{v}:\overline{u}\big]\text{.}
\end{equation} In terms of group actions of $C_n$ and $D_n$ on $\C[t,u,v]$, our convention is to first apply the actions of $\Phi$ or $\Gamma$ to points $[t:x:y]$ and \textit{then} the change of variables $\xi$. Consequently, the compositions are given by
    \begin{align*}
        \xi\circ\Phi^{\ell}: [t:x:y] &\mapsto \big[t:\omega^{\ell} u: {\omega^{-\ell}}v\big]  \numberthis\\
         \xi\circ\Gamma: [t:x:y] &\mapsto [t:v:u]
    \end{align*}
        for $\omega  = e^{\frac{2\pi i}{n}}$ and some $\ell \geq 0$. These actions give an equivalent representation to (\ref{rep1}), so
        \begin{equation}
        C_{n} = \left\langle \tilde{\Phi} \right\rangle\text{ and }         D_{n} = \left\langle \tilde{\Phi}, \tilde{\Gamma} \right\rangle
        \end{equation}

\noindent where $\tilde{\Phi} = \left(\hspace{1mm} \begin{matrix}
                1      & 0                    & 0             \\
                0      & \omega          & 0  \vspace{1mm} \\
                0      & 0          & \omega^{-1}
\end{matrix} \right)$ for $\omega=e^{\frac{2\pi i}{n}}$ and  $\tilde{\Gamma} = \left( \begin{matrix}
1      & 0                    & 0             \\
0      & 0           & 1   \\
0      & 1           & 0  
\end{matrix} \right)$ act on points $[t:u:v]$. Under this change of variables, the form (\ref{rep}) becomes
             \begin{equation}
                 f_A(t,u,v) = \det\left(tI_n + \left(\frac{u+v}{2}\right)\Re(A) + \left(\frac{u-v}{2i}\right) \Im(A)\right) = \det\left(tI_n + \frac{u}{2} A^{\ast} + \frac{v}{2} A\right)\text{.}
             \end{equation}
For our purposes, we define 
             \begin{equation}
                 \R[t,u,v]:= \C[t,u,v]^{\text{\vspace{1mm}conj}}
             \end{equation}
and the hyperbolicity condition that $f(t,\cos(\theta), \sin(\theta))$ has all real roots for all $\theta \in [0, 2\pi)$ is equivalent to 
     \begin{equation} \label{hyp}
     f(\xi(t,\cos(\theta), \sin(\theta))) = f(t,e^{i\theta},e^{-i\theta})
     \end{equation}
     having all real roots for all $\theta \in [0, 2\pi)$ where $f \in \R[t,u,v]$. Now we only consider polynomials invariant under the action of $C_n$ and later examine the more specific dihedral case in Section \ref{dihedral}. 
        \begin{proposition}\label{dimCnn}
            The degree $n$ part of the invariant ring $\C[t,u,v]^{C_n}$ has dimension $\left \lfloor \frac{n}{2} \right\rfloor + 3$.
        \end{proposition}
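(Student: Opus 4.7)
The plan is to compute the dimension by producing an explicit monomial basis. Since $\tilde{\Phi}$ is diagonal in the $(t,u,v)$ coordinates, every monomial $t^a u^b v^c$ is an eigenvector of the $C_n$-action, so $\mathbb{C}[t,u,v]_n^{C_n}$ has a basis consisting of precisely those monomials of degree $n$ that are fixed by $\tilde{\Phi}$. Reducing to monomial counting is the natural first move and sidesteps any representation-theoretic overhead.

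First I would compute the action on a monomial: under $\tilde{\Phi}$,
\[
t^a u^b v^c \;\longmapsto\; t^a (\omega u)^b (\omega^{-1} v)^c \;=\; \omega^{\,b-c}\, t^a u^b v^c,
\]
so $t^a u^b v^c$ is $C_n$-invariant if and only if $b \equiv c \pmod{n}$. Thus the problem reduces to counting triples $(a,b,c) \in \mathbb{Z}_{\geq 0}^3$ with $a+b+c=n$ and $b\equiv c \pmod{n}$.

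Next I would split this count into cases based on $b-c$. Because $0 \leq b,c \leq n$, the difference $b-c$ lies in $[-n,n]$, and the congruence $b\equiv c\pmod n$ forces $b-c \in \{-n,0,n\}$. In the case $b=c$ we have $a = n-2b$ with $0 \leq b \leq \lfloor n/2\rfloor$, contributing $\lfloor n/2\rfloor + 1$ monomials of the form $t^{n-2b} (uv)^b$. The cases $b-c = \pm n$ force $(b,c) = (n,0)$ or $(0,n)$ (and therefore $a=0$), contributing the two extra monomials $u^n$ and $v^n$.

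Summing gives $\lfloor n/2\rfloor + 1 + 2 = \lfloor n/2\rfloor + 3$ invariant monomials, which form a basis of $\mathbb{C}[t,u,v]_n^{C_n}$ since distinct monomials are linearly independent. There is no real obstacle here; the only point demanding care is the bookkeeping in the second case, namely verifying that the congruence $b \equiv c \pmod n$ together with the degree bound $b+c \leq n$ permits only the extremal solutions $(b,c)=(n,0)$ and $(0,n)$, and no others.
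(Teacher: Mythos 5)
Your proof is correct, and it takes a genuinely more elementary route than the paper's. The paper invokes Molien's formula (Theorem~2.2.1 of Sturmfels) to write the Hilbert series of $\C[t,u,v]^{C_n}$ as an average of $1/\det(I - z\,\tilde{\Phi}^{\ell})$ over the group, then expands the geometric series and extracts the coefficient of $z^n$ via character-sum cancellations. You instead observe that $\tilde{\Phi}$ acts diagonally on monomials, so $\C[t,u,v]_n^{C_n}$ has a basis of invariant monomials, and you count the triples $(a,b,c)$ with $a+b+c=n$ and $b\equiv c \pmod n$ directly. Your case analysis is complete: the degree constraint $b+c\le n$ forces $b-c\in\{-n,0,n\}$, the case $b=c$ contributes the $\lfloor n/2\rfloor+1$ powers $t^{n-2b}(uv)^b$, and the extremal cases contribute exactly $u^n$ and $v^n$ (since $b-c=n$ together with $b+c\le n$ gives $c=0$, $b=n$, and symmetrically for $-n$). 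This has the added advantage of exhibiting the explicit monomial basis $\{t^{n-2b}(uv)^b\}_{0\le b\le\lfloor n/2\rfloor}\cup\{u^n,v^n\}$, which is precisely the basis the paper goes on to use (via $\beta_1,\dots,\beta_4$ and the form \eqref{fInvz}) but does not obtain from its Hilbert-series computation alone; the Molien approach only yields the dimension. The trade-off is that Molien's formula generalizes automatically to non-diagonalizable or non-abelian group actions, whereas the monomial-counting argument relies on $\tilde{\Phi}$ being diagonal in these coordinates.
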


        \begin{proof}
            Let the Hilbert series $H(\C[t,u,v]^{C_n},z) = \sum \limits_{k=0}^{\infty} \alpha_k z^k$ where $\dim\big(\C[t,u,v]^{C_n}_k\big) = \alpha_k$ for every~$k$. By Theorem 2.2.1 of \cite{sturmfels}, the Hilbert series is given by
            \begin{align*}
                H(\C[t,u,v]^{C_n},z) &= \frac{1}{|C_n|} \sum \limits_{\Xi \in C_n} \frac{1}{\det(I - z\hspace{1mm}\Xi)} \\&= \frac{1}{n} \sum \limits_{\ell=0}^{n-1} \frac{1}{\det\big(I - z\hspace{1mm}\tilde{\Phi}^{\ell}\big)} = \frac{1}{n} \left(\frac{1}{1-z}\right) \sum \limits_{\ell=0}^{n-1} \frac{1}{(1-\omega^{\ell}z)(1-\omega^{-\ell}z)}\text{.}
            \end{align*}
      Expand the inner term, so
      $$
                \frac{1}{(1-\omega^{\ell}z)(1-\omega^{-\ell}z)} = \sum \limits_{i=0}^{\infty} (\omega^{\ell}z)^i\sum \limits_{j=0}^{\infty} (\omega^{-\ell}z)^j 
                = \sum \limits_{i=0}^{\infty} \sum \limits_{j=0}^{i} \omega^{\ell(i-2j)}z^i
            $$
            and the Hilbert series becomes
            \begin{align*}
                H(\C[t,u,v]^{C_n},z) &= \frac{1}{n} \left(1+z+z^2+\cdots\right) \sum \limits_{\ell=0}^{n-1} \left(\sum \limits_{i=0}^{\infty} \sum \limits_{j=0}^{i} \omega^{\ell(i-2j)}z^i \right)\text{.}
            \end{align*}
            In this expansion, we want to calculate the coefficient $\alpha_n$. More explicitly,
            \begin{align*}
                \alpha_n &= \frac{1}{n} \sum \limits_{\ell=0}^{n-1} \left(\sum \limits_{i=0}^{n} \sum \limits_{j=0}^{i} \omega^{\ell(i-2j)}\right) \\
                &=\frac{1}{n} \sum \limits_{\ell=0}^{n-1} \left( (1) + \left(\omega^{\ell} + \omega^{-\ell}\right) + \left(\omega^{2\ell} + 1 + \omega^{-2\ell}\right) + \cdots + \left(\omega^{n \ell} + \omega^{(n-2)\ell} + \cdots + \omega^{-n\ell}\right) \right) \\
                &=\frac{1}{n} \sum \limits_{\ell=0}^{n-1} 1 + \sum \limits_{\ell=0}^{n-1} \left(\omega^{\ell} + \omega^{-\ell}\right) + \sum \limits_{\ell=0}^{n-1}\left(\omega^{2\ell} + 1 + \omega^{-2\ell}\right) + \cdots + \sum \limits_{\ell=0}^{n-1}\left(1 + \omega^{(n-2)\ell} + \cdots + 1\right) \\
                &=\frac{1}{n}\left( n + 0 + n + \cdots +\left(n+0+\cdots+0+n\right)\right) \\
                &= \left\lfloor\frac{n}{2}\right\rfloor + 3\text{.}
                \end{align*}
        \end{proof}
\noindent Let 
    \begin{equation}\beta_1(t,u,v) = t\text{, } \beta_2(t,u,v) = uv\text{, } \beta_3(t,u,v)=\frac{u^n+v^n}{2}\text{, } \beta_4(t,u,v)=\frac{u^n-v^n}{2i}
    \end{equation}where $\beta_i \in \C[t,u,v]^{C_n}$. Then $\dim\left(\C\left[\beta_1,\beta_2,\beta_3,\beta_4\right]_n\right) = \left \lfloor \frac{n}{2} \right \rfloor +3$ and $\C\left[\beta_1,\beta_2,\beta_3,\beta_4\right]_n = \C[t,u,v]^{C_n}_n$ by Proposition \ref{dimCnn}. Therefore, all polynomial invariants of $C_n$ with degree $n$ are generated by $\beta_1, \beta_2, \beta_3$, and $\beta_4$.
In general, any $f \in \R[t,u,v]^{C_n}_n$ can be written
        \begin{equation}\label{fInvz}
             f(t,u,v) = t^n + \sum_{r=1}^{\floor*{\frac{n}{2}}} c_r t^{n-2r} (uv)^{r} +c_0 \left(\frac{u^n+v^n}{2}\right)+\tilde{c_0} \left(\frac{u^n-v^n}{2i}\right) 
        \end{equation}
        for some coefficients $c_i, \tilde{c_0} \in \R$.

\section{Prerequisites} \label{prereq}

In this section, we describe several properties of $f$ and $\frac{\partial f}{\partial t}$ using the form (\ref{fInvz}). We later use these facts in Sections \ref{smooth} and \ref{singular} to prove our main theorem.  The first lemma states that the partial derivative $\frac{\partial f}{\partial t}$ is a product of circles.  In the following proof we consider $f \in \R[t,u,v]$ and use the equivalence from (\ref{hyp}).

        \begin{lemma} \label{circles}
           The partial derivative can be written $\frac{\partial f}{\partial t} = t^{k} q_1 q_2 \cdots q_{\lfloor\frac{n-1}{2}\rfloor}$ where $k =  \Bigg\{
\begin{array}{ll}
       \vspace{2mm} \hspace{-1mm}0, & \text{\hspace{-2mm} $n$ odd} \\
      \hspace{-.5mm}1, & \text{\hspace{-2mm} $n$ even} \\
\end{array}$ for $q_{j} = t^2 - s_j uv$ and $s_j \in \R_{\geq 0}$.
        \end{lemma}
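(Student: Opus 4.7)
The plan is to exploit two features of an invariant hyperbolic $f$: the special form (\ref{fInvz}) forces $\partial f/\partial t$ to depend on $u,v$ only through the product $uv$, and hyperbolicity combined with Rolle's theorem then pins down the roots.

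First I would differentiate the form (\ref{fInvz}) in $t$. The terms $\tfrac{u^n + v^n}{2}$ and $\tfrac{u^n - v^n}{2i}$ are annihilated because they do not involve $t$, leaving
\[
\frac{\partial f}{\partial t}(t,u,v) = n t^{n-1} + \sum_{r=1}^{\lfloor n/2\rfloor} c_r (n-2r)\, t^{n-2r-1} (uv)^r.
\]
For $n$ even the ``boundary'' term $r = n/2$ has coefficient $c_{n/2}(n-n)=0$, so it drops out; the exponents of $t$ that actually appear are therefore $n-1, n-3, \ldots, k$ where $k=0$ if $n$ is odd and $k=1$ if $n$ is even. Factoring out $t^k$ gives $\tfrac{\partial f}{\partial t} = t^k P(t^2, uv)$, where $P(a,b)$ is a two-variable polynomial of total weighted degree $M := \lfloor (n-1)/2 \rfloor$ when $a,b$ are each given weight $2$; equivalently $P$ is homogeneous of degree $M$ in $(a,b)$, with leading coefficient $n$ in $a$.

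Next I would invoke hyperbolicity via (\ref{hyp}): for each $\theta$, the real univariate polynomial $f(t,e^{i\theta},e^{-i\theta})$ has only real roots. Rolle's theorem applied in the single variable $t$ then guarantees the same for its derivative, which is $\tfrac{\partial f}{\partial t}(t,e^{i\theta},e^{-i\theta}) = t^k P(t^2, 1)$ since $e^{i\theta}e^{-i\theta}=1$. So $P(t^2,1)$, viewed as a real polynomial in $t$ of degree $2M$ and even in $t$, has all real roots; these must come in $\pm$ pairs $\pm\sqrt{s_j}$ with $s_j \geq 0$, giving $P(t^2,1) = n\prod_{j=1}^{M}(t^2 - s_j)$.

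Finally, homogeneity of $P$ in $(a,b)$ promotes this factorization from the slice $b=1$ to the full polynomial: $P(a,b) = b^M P(a/b, 1) = n\prod_{j=1}^{M}(a - s_j b)$, so
\[
\frac{\partial f}{\partial t} = n\, t^k \prod_{j=1}^{M}(t^2 - s_j \, uv) = t^k\, q_1 q_2 \cdots q_{\lfloor (n-1)/2\rfloor},
\]
as claimed (the scalar $n$ is absorbed into the product of the $q_j$). The main technical point, and the only place where real care is required, is the parity bookkeeping in Step~1 showing that $\tfrac{\partial f}{\partial t}$ really is a polynomial in $t^2$ and $uv$ after pulling out $t^k$; once this reduction is in place the hyperbolicity argument and the homogeneous factorization are essentially automatic.
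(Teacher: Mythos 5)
Your proof is correct and follows essentially the same route as the paper's: both rest on observing that $\partial f/\partial t \in \mathbb{R}[t^2, uv]$ after pulling out $t^k$, invoking hyperbolicity of $\partial f/\partial t$ (the paper states it directly, you derive it via Rolle), and extracting $s_j \geq 0$ from real-rootedness on the circle $uv=1$. The only cosmetic difference is the order of operations --- you specialize to $uv=1$ first and then lift the factorization by homogeneity, while the paper factors over $\mathbb{C}$ first and then specializes each factor $q_j$ to conclude $s_j \in \mathbb{R}_{\geq 0}$.
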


        \begin{proof}
            First write $\frac{\partial f}{\partial t} = nt^{n-1}+\sum \limits_{r=1}^{\lfloor \frac{n}{2} \rfloor} (n-2r)c_r t^{n-2r-1}(uv)^r$. Assume $n$ is odd. Then $n-1=2m$ for some integer $m$, so $$\frac{\partial f}{\partial t} = n{\left(t^{2}\right)^m}+\sum \limits_{r=1}^{\lfloor \frac{n}{2} \rfloor} (n-2r)c_r \left(t^2\right)^{m-r}(uv)^r \in \R[t^2,uv]$$ and factor over $\C$ as $\frac{\partial f}{\partial t}= q_1 q_2 \cdots q_{\frac{n-1}{2}}$ where $q_j = t^2-s_juv$ for some $s_j \in \C$. Since $f$ is hyperbolic with respect to $(1,0,0)$, this means $\frac{\partial f}{\partial t}$ is hyperbolic with respect to $(1,0,0)$ and $\frac{\partial f}{\partial t}(t,e^{i\theta},e^{-i\theta})$ has all real roots for each $\theta \in [0,2\pi)$. This means
            $q_j(t,e^{i\theta},e^{-i\theta}) = t^2-s_j$ has two real roots for every $j$, so $s_j \in \R_{\geq 0}$ for every $j$. If $n$ is even, we can factor out $t$ and proceed with the remaining polynomial of odd degree as before.
        \end{proof}

        The next lemma shows generic $f$ and $\frac{\partial f}{\partial t}$ cannot intersect at the line at infinity when $n$ is odd. Specifically, we require at least one of $c_0$, $\tilde{c_0}$ to be nonzero. The case where $c_0=\tilde{c_0}=0$ occurs when $\mathcal{V}_{\C}(f)$ has singularities and is considered in Section \ref{singular}. With this condition, we establish an explicit description for the points $\cpts$.

        \begin{lemma} \label{intpts}
            If $n$ is odd $f \in \R[t,u,v]^{C_n}_n$ is hyperbolic with at least one of $c_0$, $\tilde{c_0}$ nonzero, then all points in $\cpts$ have $t \neq 0$.
        \end{lemma}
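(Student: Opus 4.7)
The plan is to argue by contradiction. Suppose a point $[0:u_0:v_0] \in \cpts$ exists. From the invariant form (\ref{fInvz}) I compute directly
\[
f(0,u,v) \;=\; \frac{c_0 - i\tilde{c_0}}{2}\, u^n \;+\; \frac{c_0 + i\tilde{c_0}}{2}\, v^n,
\]
whose two coefficients are nonzero complex numbers precisely because $c_0, \tilde{c_0} \in \R$ are not both zero. Thus $f(0, u_0, v_0) = 0$ forces $u_0, v_0 \neq 0$.

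Next, for odd $n$, the only term in (\ref{fInvz}) of $t$-degree exactly one is $c_{(n-1)/2}\, t\, (uv)^{(n-1)/2}$, giving
\[
\frac{\partial f}{\partial t}(0,u,v) \;=\; c_{(n-1)/2}(uv)^{(n-1)/2}.
\]
Since $u_0 v_0 \neq 0$, the vanishing of this partial at $[0:u_0:v_0]$ forces $c_{(n-1)/2} = 0$. The lemma thus reduces to showing that hyperbolicity of $f$ together with $(c_0, \tilde{c_0}) \neq (0,0)$ implies $c_{(n-1)/2} \neq 0$.

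To establish this, I would use the hyperbolicity characterization (\ref{hyp}). Evaluating $f$ along $u = e^{i\theta}$, $v = e^{-i\theta}$ yields
\[
Q_\theta(t) \;=\; t^n + c_1 t^{n-2} + \cdots + c_{(n-1)/2}\, t + C(\theta), \qquad C(\theta) \;:=\; c_0 \cos(n\theta) + \tilde{c_0} \sin(n\theta),
\]
a real polynomial in $t$ with only odd $t$-powers plus the constant, and real-rooted for every $\theta$. Because $(c_0, \tilde{c_0}) \neq (0,0)$, $C(\theta) = \sqrt{c_0^2 + \tilde{c_0}^2}\,\cos(n\theta - \phi)$ is a nontrivial sinusoid attaining all values in $[-M, M]$ with $M > 0$. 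Assuming $c_{(n-1)/2} = 0$, every remaining nonzero $t$-power in $Q_\theta - C(\theta)$ is odd and at least three; let $m \geq 3$ denote the smallest such power and $\alpha \neq 0$ its coefficient. Pick $\theta_0$ with $C(\theta_0) = 0$, so $t = 0$ is a root of $Q_{\theta_0}$ of multiplicity exactly $m$. Perturbing $\theta$ slightly makes $C(\theta)$ a small nonzero real, and by continuity of roots (Rouch\'e or Puiseux) the $m$ roots of $Q_\theta$ near $t = 0$ approximate the $m$-th roots of $-C(\theta)/\alpha \in \R \setminus \{0\}$. For $m$ odd and $\geq 3$, only one $m$-th root of a nonzero real number is real, so $Q_\theta$ acquires at least $m - 1 \geq 2$ non-real roots, contradicting hyperbolicity.

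The principal obstacle is justifying the perturbation step rigorously: one must promote the local model $\alpha t^m + C(\theta)$ into a genuine root-count statement for the full $Q_\theta$. The degenerate subcase where several of the highest-index $c_r$'s vanish simultaneously is absorbed uniformly by working with the smallest nonzero $t$-power $m$ rather than specifically $m = 3$, so no separate case analysis is required.
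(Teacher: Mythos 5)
Your reduction correctly identifies the crux of the lemma, and in doing so uncovers a point that the paper's own proof glides past. You compute $f(0,u,v) = \tfrac{c_0-i\tilde{c_0}}{2}u^n + \tfrac{c_0+i\tilde{c_0}}{2}v^n$ and $\tfrac{\partial f}{\partial t}(0,u,v) = c_{(n-1)/2}(uv)^{(n-1)/2}$ directly from (\ref{fInvz}) and observe that the whole statement reduces to $c_{(n-1)/2}\neq 0$. The paper instead invokes the factorization $\tfrac{\partial f}{\partial t}=q_1\cdots q_{(n-1)/2}$ with $q_j=t^2-s_j uv$ from Lemma \ref{circles} and asserts that vanishing at $t=0$ forces $u=0$ or $v=0$; but that step tacitly requires every $s_j>0$, which is precisely equivalent to $c_{(n-1)/2}\neq 0$, and Lemma \ref{circles} only delivers $s_j\geq 0$. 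So the fact you flag as the ``principal obstacle'' is not an artifact of your method --- it is the real content of the lemma, and your route makes it visible rather than burying it.

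Your Newton--Puiseux sketch for this remaining step is sound in outline, though, as you say, it asks for a quantitative local root count. Here is a shorter route that avoids asymptotics entirely. Suppose $c_{(n-1)/2}=0$, and set $p(t)=t^n+\sum_{r\geq 1}c_r\,t^{n-2r}$. Since $p$ is an odd polynomial whose $t^1$-coefficient vanishes, $p'$ is even with zero constant term, so $t=0$ is a root of $p'$ of multiplicity at least two. Hyperbolicity together with (\ref{hyp}) makes $p(t)+C$ real-rooted for every value $C$ attained by $c_0\cos(n\theta)+\tilde{c_0}\sin(n\theta)$, and because $(c_0,\tilde{c_0})\neq(0,0)$ some such $C$ is nonzero. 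But when a real polynomial $g$ is real-rooted, the roots of $g$ and $g'$ interlace, and interlacing forces any root of $g'$ of multiplicity at least two to be a root of $g$ as well. Applying this to $g=p+C$ (whose derivative is $p'$) gives $g(0)=C=0$, a contradiction. This closes the gap you identified and completes your argument.
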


        \begin{proof}
    When $n$ is odd, $\deg\big(\frac{\partial f}{\partial t}\big)=n-1$ is even. By Lemma \ref{circles}, we can write $\frac{\partial f}{\partial t} = q_1 q_2 \cdots q_{\frac{n-1}{2}}$ where $q_j = t^2 - s_juv$ for some $s_j \in \R_{\geq 0}$. If $\frac{\partial f}{\partial t}$ vanishes when $t=0$, then either $u=0$ or $v=0$ as well. Suppose $\frac{\partial f}{\partial t}\left(0,1,0\right)=0$ or $\frac{\partial f}{\partial t}\left(0,0,1\right)=0$. Then, $f(0,1,0)=\frac{c_0-i\tilde{c_0}}{2}$ or $f(0,0,1)=\frac{c_0+i\tilde{c_0}}{2}$, which are both nonzero under assumption. Therefore, $f$ does not vanish in either case.
        \end{proof}
        
Notice $f$, $\frac{\partial f}{\partial t} \in \R[t,x,y]^{C_n}$, so if $[t:x:y] \in \cpts$, then for every $\ell = 0,1, \ldots, n-1$, $\tilde{\Phi}^{\ell}\left([t:u:v]\right)$, $\tilde{\Phi}^{\ell}\left(\text{conj}\left([t:u:v]\right)\right) \in \xi\left(\cpts\right)$. Now, if $\rpts$ is empty, and in particular, $f$ has no real singularities, then these complex intersection points are distinct.
        \begin{proposition} \label{distinct}
        If $\rpts$ is empty and at least one of $c_0$, $\tilde{c_0}$ is nonzero, then $\cpts$ consists of $n(n-1)$ distinct points.
        \end{proposition}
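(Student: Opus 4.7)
The plan is to combine Bezout's theorem with an explicit parametrization of $V(\partial f/\partial t)$ afforded by Lemma \ref{circles}. The hypothesis $\rpts = \emptyset$ together with hyperbolicity ensures $V(f)$ is smooth (complex singular points of hyperbolic polynomials are real, so they would have to lie in $\rpts$), hence $f$ is square-free. Since $f$ has leading coefficient $1$ in $t$, any common factor of $f$ and $\partial f/\partial t$ would have to lie in $\C[u,v]$, which $f$ does not admit; thus the two curves share no common component. Bezout then gives $\sum_p \mu_p(f,\partial f/\partial t) = n(n-1)$, and the task is to show every intersection multiplicity equals $1$.

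Using Lemma \ref{circles}, decompose $\partial f/\partial t = \prod_j q_j$ (with an extra factor of $t$ when $n$ is even) and parametrize each smooth conic $V(q_j)$ via $[u:t:v] = [\alpha^2 : \sqrt{s_j}\,\alpha\beta : \beta^2]$. Substituting into the canonical form \eqref{fInvz} collapses $f|_{V(q_j)}$ into
\[
K_j(\alpha\beta)^n + \tfrac{c_0 - i\tilde c_0}{2}\,\alpha^{2n} + \tfrac{c_0 + i\tilde c_0}{2}\,\beta^{2n}
\]
for some $K_j \in \R$ depending on $s_j$ and the $c_r$. Writing $A = (c_0 - i\tilde c_0)/2$ and $\sigma = (\alpha/\beta)^n$, the zero locus reduces to the quadratic $A\sigma^2 + K_j\sigma + \bar A = 0$. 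The hypothesis that one of $c_0, \tilde c_0$ is nonzero forces $A \neq 0$, so both $\sigma$-roots are nonzero (their product equals $\bar A/A$), and each lifts to $n$ distinct values of $\tau = \alpha/\beta$, giving $2n$ points per conic provided the discriminant $K_j^2 - (c_0^2 + \tilde c_0^2)$ does not vanish.

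The hypothesis $\rpts = \emptyset$ is exactly what rules out the degenerate case $K_j^2 = c_0^2 + \tilde c_0^2$: in that case the double root $\sigma_0 = -K_j/(2A)$ satisfies $|\sigma_0| = 1$, so $\tau^n = \sigma_0$ has $n$ solutions with $|\tau| = 1$; tracing back through the parametrization, $|\tau| = 1$ corresponds (after scaling) to $\alpha = \bar\beta$, i.e.\ real points $u = \bar v$ lying in $V(f) \cap V(q_j) \subseteq \cpts$, contradicting $\rpts = \emptyset$. Therefore each $V(f) \cap V(q_j)$ consists of $2n$ distinct points. Distinct conics $V(q_j), V(q_k)$ meet only on $\{t=0\}$, which is disjoint from $\cpts$ by Lemma \ref{intpts} for $n$ odd or by direct evaluation of $f(0,u,v)$ for $n$ even; the $s_j$ are themselves pairwise distinct because smoothness of $V(f)$ makes $\partial f/\partial t$ strictly hyperbolic, so the polynomial $\prod_j (t^2 - s_j) = \partial f/\partial t\big|_{u = e^{i\theta}, v = e^{-i\theta}}$ must have $n-1$ distinct real roots. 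Summing $2n$ over the $\lfloor(n-1)/2\rfloor$ conics (plus $n$ points from $\{t=0\}$ when $n$ is even) yields exactly $n(n-1)$ distinct points. The main obstacle is the discriminant-exclusion step, which relies on the real-parametrization trick to convert the hypothesis $\rpts = \emptyset$ into the needed non-vanishing.
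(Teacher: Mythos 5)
Your core counting idea is sound and is actually a nice streamlining of the paper's multiplicity argument: instead of bounding intersection multiplicities abstractly, you parametrize each conic $V(q_j)$ by $\mathbb{P}^1$, pull $f$ back to the binary form $K_j(\alpha\beta)^n + A\alpha^{2n}+\bar A\beta^{2n}$ with $A = (c_0-i\tilde c_0)/2$, and read off exactly when the $2n$ roots are simple. The observation that a double $\sigma$-root has $|\sigma_0|=1$ (since the product of the roots is $\bar A/A$), and that $|\alpha/\beta|=1$ parametrizes precisely the real locus of the conic, is a clean way to convert the repeated-root situation into a real intersection point, which $\rpts=\emptyset$ forbids. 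The paper proves the same thing less directly: it first checks that distinct orbit representatives stay distinct under rotation and conjugation, then uses the Bezout count $|\mathcal{V}_{\C}(f,q_j)|=2n$ to rule out intersection multiplicity $\geq 2$. Your parametrization handles both the orbit bookkeeping and the multiplicity question in one pass, which is a genuine simplification of the paper's argument for the conic part.

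However, two of your preliminary justifications are wrong and one is circular relative to the paper's logic.

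First, ``complex singular points of hyperbolic polynomials are real'' is false in general: for instance $f = (t^2-uv)(t^2-4uv)$ is hyperbolic and $C_4$-invariant but has complex singular points at $[0:1:0]$ and $[0:0:1]$. (That example has $c_0=\tilde c_0=0$, so it does not violate the Proposition, but it refutes the general principle you invoke.) Smoothness of $\mathcal{V}_{\C}(f)$ is in fact \emph{Corollary~\ref{nosing}}, which the paper deduces \emph{from} Proposition~\ref{distinct}; assuming it here would be circular. You do not actually need smoothness, but you must remove the two places you lean on it.

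Second, ``any common factor of $f$ and $\partial f/\partial t$ would have to lie in $\C[u,v]$'' does not follow from $f$ being monic in $t$ — it would require $f$ to be square-free, which is exactly the smoothness you cannot yet assume. Instead argue as the paper does: by Lemma~\ref{circles} an irreducible common factor must be $t$ or some $q_j = t^2 - s_j uv$; $t$ is ruled out because $f(0,u,v)= Au^n+\bar A v^n\neq 0$, and $q_j$ is ruled out because $[\sqrt{s_j}:1:1]\in V(q_j)$ would pull back under $\xi^{-1}$ to a real point of $\rpts$. Third, distinctness of the $s_j$ should also come directly from $\rpts=\emptyset$, not from ``smoothness makes $\partial f/\partial t$ strictly hyperbolic'': if $s_j=s_k$, then $\partial f/\partial t(t,e^{i\theta},e^{-i\theta})$ has a double root, and since $f(t,e^{i\theta},e^{-i\theta})$ has only real roots, the interlacing of roots forces $f(t,e^{i\theta},e^{-i\theta})$ itself to have a repeated root, producing a point of $\rpts$. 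With these three repairs the rest of your argument goes through and gives a correct, and somewhat more explicit, proof than the paper's.
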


        \begin{proof}
        Suppose $f$ and $\frac{\partial f}{\partial t}$ have a common factor. By Lemma \ref{circles} and since $f$ has no factor of $t$ by assumption, the common factor must be $t^2-s_juv$ for some $s_j \in \R_{\geq 0}$. Then $$\xi^{-1}\left(\left[\sqrt{s_j}:1:1\right]\right) = \left[\sqrt{s_j}:1:0\right] \in \mathcal{V}_{\R}\left(f,\frac{\partial f}{\partial t}\right)\text{,}$$ which is a contradiction. Therefore, $f$ and $\frac{\partial f}{\partial t}$ have no common factors and by Bezout's theorem, $\left|\cpts\right| = n(n-1)$. For distinctness, we need to show for any point in $\cpts$, each orbit under the action of conjugation and rotation is distinct. Suppose for some $\ell \in \{1,\ldots,n-1\}$ that $[1:u:v]=[1:\omega^{\ell}u:\omega^{-\ell}v] \in \xi\left(\cpts\right)$. This implies $\omega^{\ell}u=u$ and $\omega^{-\ell}v=v$ for $\ell \neq 0$, so $u=v=0$ and $[1:0:0] \in \rpts$, which is a contradiction. Now suppose $[1:u:v]=[1:\omega^{\ell}\overline{v}:\omega^{-\ell}\overline{u}] \in \xi\left(\cpts\right)$ for some $\ell \in \{0,\ldots,n-1\}$. 
        These equivalences imply $\omega^{2\ell}u=u$ and $\omega^{-2\ell}v=v$, so either $u=v=0$, $\ell = 0$ or $\ell=\frac{n}{2}$. If $u=v=0$, this is a contradiction as in the previous case. If $\ell=0$, then $[1:x:y]=\xi^{-1}\left([1:u:v]\right) = \xi^{-1}\left([1:\overline{v}:\overline{u}]\right) = [1:\overline{x}:\overline{y}]$, so $x, y \in \R$ and $[1:x:y] \in \rpts$, which is a contradiction. The case $\ell=\frac{n}{2}$ can only happen when $n$ is even since $\ell \in \mathbb{Z}$.
    Assume $n$ be even. If $\ell = \frac{n}{2}$, then $u = -\overline{v}$ and $u\overline{u} = v\overline{v}$. By Proposition \ref{circles}, we can write $\frac{\partial f}{\partial t} = tq_1q_2 \cdots q_{\frac{n-2}{2}}$ for $q_j = t^2-s_juv$. For some $j$ this means 
    $0 = q_j(1,-\overline{v},v)= 1 + s_j v \overline{v}\text{,}$
    which is a contradiction.
    Lastly, suppose $[0:u:v]=[0:\omega^{\ell}u:\omega^{-\ell}v]$ for some $\ell \in \{1,\ldots,n-1\}$. This gives $\omega^{\ell}u=u$ and  \vspace{.5mm} $\omega^{-\ell}v=v$, so $u=v=0$ and this is a contradiction as before.
    \\Now suppose $p \in \xi\left(\cpts\right)$ with multiplicity $m_p \geq 2$. Then $\tilde\Phi^{\ell}(p), \tilde\Phi^{\ell}(\overline{p}) \in \xi\left(\cpts\right)$ each with multiplicity $m_p$. By Lemma \ref{circles}, $q_j(p)=0$ for some factor $q_j$ of $\frac{\partial f}{\partial t}$. Then $$\left|\mathcal{V}_{\C}(f, q_j)\right| = 2n < 2n\cdot m_p\text{,}$$ which is a contradiction. Therefore, each point in $\cpts$ is distinct.
        \end{proof}

         Lemma \ref{intpts} and Lemma \ref{distinct} give an explicit form for the points of intersection.
\noindent The common points have the form $\xi\left(\cpts\right) = S \hspace{.5mm}\cup\hspace{.5mm} \overline{S}$ where
     \begin{equation}\label{S}
            S =
          \begin{cases}{}
        \left\{ [1: \omega^{\ell}u_i: \omega^{-\ell}v_i] \mid 1 \leq i \leq \tfrac{n-1}{2}, 0 \leq \ell \leq n-1\right\}\text{,} & \text{ $n$ odd} \vspace{2mm} \\
        \left\{ [1: \omega^{\ell}u_i: \omega^{-\ell}v_i], [0: \omega^{k} u: \omega^{-k}v]   \mid 1\leq i \leq \tfrac{n-2}{2}, 0 \leq \ell \leq n-1, 0\leq k \leq \tfrac{n-2}{2}\right\}\text{,} & \text{ $n$ even}
           \end{cases}\vspace{1mm}
   \end{equation}
and the point $[1: \omega^{\ell}u_i: \omega^{-\ell}v_i] \in S$ has corresponding point $[1:\omega^{\ell}\overline{v_i},\omega^{-\ell}\overline{u_i}] \in \overline{S}$. Let the set 
        \begin{equation} \label{Stilde}
            \tilde{S} =
            \begin{cases}{}
        \left\{ [1: u_i: v_i] \mid 1 \leq i \leq \tfrac{n-1}{2}\right\}\text{,} & \text{ $n$ odd} \vspace{2mm} \\
        \left\{ [1:u_i:v_i], [0:  u: v]   \mid 1\leq i \leq \tfrac{n-2}{2}\right\}\text{,} & \text{ $n$ even}
           \end{cases}
       \end{equation}
denote a set of orbit representatives so applying the rotation $\tilde{\Phi}^{\ell}$ for $\ell = 0, \ldots, n-1$ to all points in $\tilde{S}$ gives all of $S$.

        \begin{corollary}\label{nosing}
            If $\rpts$ is empty and at least one of $c_0$, $\tilde{c_0}$ is nonzero, then $\mathcal{V}_{\C}(f)$ has no singularities.
        \end{corollary}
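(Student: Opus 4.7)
The plan is to argue by contradiction using Proposition \ref{distinct}. Suppose $p$ is a singular point of $\mathcal{V}_{\C}(f)$, so $f(p)=0$ and every partial derivative of $f$ vanishes at $p$. In particular $\frac{\partial f}{\partial t}(p) = 0$, so $p \in \cpts$.

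Under the corollary's hypotheses---which match those of Proposition \ref{distinct}---that proposition gives $|\cpts| = n(n-1)$ as a set of distinct points, and its proof shows $f$ and $\frac{\partial f}{\partial t}$ share no common factor. By Bezout's theorem the total intersection multiplicity summed over $\cpts$ equals $n(n-1)$, so each point of $\cpts$ must have local intersection multiplicity exactly $1$. This is where the strength of Proposition \ref{distinct} is used: distinctness upgrades to transversality of $f$ with $\frac{\partial f}{\partial t}$ at every common zero.

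To finish, I would apply the standard lower bound $I_p(f,g) \geq m_p(f)\cdot m_p(g)$ on the local intersection multiplicity, where $m_p$ denotes multiplicity of the respective curve at $p$. Since $p$ is singular for $f$, one has $m_p(f) \geq 2$, and $m_p\!\left(\frac{\partial f}{\partial t}\right) \geq 1$ because $p$ lies on that curve; hence $I_p\!\left(f,\frac{\partial f}{\partial t}\right) \geq 2$, contradicting the conclusion of the previous paragraph. The hard part is nothing more than bookkeeping: Proposition \ref{distinct} already packages the essential work, and the corollary reduces to the observation that a complex singularity of $f$ would produce a non-transverse point of $\cpts$, which is now prohibited.
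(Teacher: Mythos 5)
Your proof is correct and follows essentially the same route as the paper: contradiction from a hypothetical singular point $p$, noting $I_p\bigl(f,\tfrac{\partial f}{\partial t}\bigr)\geq 2$ at a singularity, and using Proposition~\ref{distinct} to see that all $n(n-1)$ intersection points are simple. You have merely unpacked the two facts the paper cites to \cite{fulton} in one breath --- the lower bound $I_p(f,g)\geq m_p(f)\,m_p(g)$ and the B\'ezout accounting that forces each local multiplicity to equal $1$ --- so this is an expansion, not a different argument.
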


        \begin{proof}
        Assume $\rpts = \emptyset$ and suppose $\mathcal{V}_{\C}(f)$ has a singularity at the point $p$. Then the intersection multiplicity of $p$ at $f$ and $\frac{\partial f}{\partial t}$ is at least $2$ \cite{fulton}. By Proposition \ref{distinct}, $\cpts$ consists of $n(n-1)$ distinct points, which gives a contradiction.
        \end{proof}

    Corollary \ref{nosing} implies we need only consider two cases in order to prove Theorem \ref{main}: the case when $\mathcal{V}_{\C}(f)$ is smooth (Section \ref{smooth}) and the case when $\mathcal{V}_{\C}(f)$ has at least one real singularity (Section \ref{singular}). In either case, we consider a restriction of the following map.
   Define the linear map
    \begin{align}
    \phimap{}: \C[t,u,v] \rightarrow \C[t,u,v] \text{ where }
        h(t,u,v) \mapsto h(t,\omega u,\omega^{-1}v)
    \end{align}
for $\omega = e^{\frac{2\pi i}{n}}$.
The eigenvalues of $\phimap{}$ are $1,\omega,\dots, \omega^{n-1}$. Denote the corresponding eigenspaces of $\C[t,u,v]$ by $
            \phieig{1}, \phieig{\omega}, \dots, \phieig{\omega^{n-1}}$.
 Let 
     \begin{equation}\label{restriction}
         \phimap{k} :=     
                 \left.\phimap{}\right|_{\C[t,u,v]_k}
     \end{equation}
     be the restriction of $\phimap{}$ to $\C[t,u,v]_k$ and denote the corresponding eigenspaces of $\C[t,u,v]_k$ by 
 \begin{equation}
 \Ek{1}, \Ek{\omega}, \dots, \Ek{\omega^{n-1}}\text{.}
 \end{equation}
    Dixon's idea was to recover $M$, the determinantal representation of $f$, by first constructing $\adj(M)$. Plaumann and Vinzant altered this construction to produce Hermitian determinantal representations. Our desired representation is Hermitian with added structure, so in Section $\ref{smooth}$ we further modify the Hermitian construction to reflect that special structure. In particular, we require the entries of $\adj(M)$ to lie in the eigenspaces of $\phimap{n-1}$. 
In the next lemma we determine the dimension of these eigenspaces.

\begin{lemma}\label{dimEigen}
For all $\ell = 0, \ldots, n-1$,
    $\dim \Big(\El\Big) =  
            \begin{cases}{}
           \frac{n+1}{2}\text{,} & \text{ $n$ odd} \vspace{1mm}\\
     \frac{n}{2}\text{\hspace{.5mm},} & \text{ $n$ even and $\ell$ even} \vspace{1mm}\\
           \frac{n}{2}+1\text{,} & \text{ $n$ even and $\ell$ odd} \end{cases}\text{.}$
\end{lemma}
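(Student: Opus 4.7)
\bigskip

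\noindent\textbf{Proof plan.}
The approach is to exhibit the monomial basis of $\mathbb{C}[t,u,v]_{n-1}$ as a simultaneous eigenbasis of $\phimap{n-1}$, and then count how many monomials land in each eigenspace. Since $\phimap{}$ acts on a monomial by
\[
\phimap{}\bigl(t^a u^b v^c\bigr)=\omega^{b-c}\,t^a u^b v^c,
\]
every $t^a u^b v^c$ with $a+b+c=n-1$ is an eigenvector of $\phimap{n-1}$ with eigenvalue $\omega^{b-c}$, and distinct monomials are linearly independent. Consequently, for each $\ell\in\{0,1,\ldots,n-1\}$ the eigenspace $\El$ is spanned by those monomials $t^a u^b v^c$ with
\[
a+b+c=n-1 \quad\text{and}\quad b-c\equiv\ell\pmod{n},
\]
so the problem reduces to counting these triples.

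Next, I would observe that the constraints $a,b,c\geq 0$ and $a+b+c=n-1$ force $|b-c|\leq n-1$. Hence within the residue class $\ell\pmod n$, the only admissible values of $b-c$ are $d=\ell$ and (when $\ell\neq 0$) $d=\ell-n$. For a fixed difference $d$, setting $b=c+d$ when $d\geq 0$ (resp. $c=b-d$ when $d<0$) and solving $a+2c+d=n-1$ shows that the number of nonnegative solutions is $\bigl\lfloor\tfrac{n-1-|d|}{2}\bigr\rfloor+1$. Thus
\[
\dim\El=\bigl\lfloor\tfrac{n-1-\ell}{2}\bigr\rfloor+1+\bigl\lfloor\tfrac{\ell-1}{2}\bigr\rfloor+1\qquad(\ell\neq 0),
\]
with the obvious adjustment that only the $d=0$ term contributes when $\ell=0$.

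Finally, I would split into the three parity cases asserted in the lemma and simplify the floor functions. When $n$ is odd, the two integers $n-1-\ell$ and $\ell-1$ have opposite parity, so the two floors together total $\tfrac{n-3}{2}$, giving $\dim\El=\tfrac{n+1}{2}$; for $\ell=0$ one directly gets $\bigl\lfloor\tfrac{n-1}{2}\bigr\rfloor+1=\tfrac{n+1}{2}$ as well. When $n$ is even, the two integers $n-1-\ell$ and $\ell-1$ have the same parity: both odd if $\ell$ is even (yielding $\dim\El=\tfrac{n}{2}$, matching the $\ell=0$ case $\bigl\lfloor\tfrac{n-1}{2}\bigr\rfloor+1=\tfrac{n}{2}$), and both even if $\ell$ is odd (yielding $\dim\El=\tfrac{n}{2}+1$). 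The only mildly delicate point is keeping the parity bookkeeping consistent across the four subcases and remembering to treat $\ell=0$ separately, but no genuine obstacle arises.
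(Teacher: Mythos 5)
Your proof is correct and follows essentially the same strategy as the paper: identify the monomial basis of $\C[t,u,v]_{n-1}$ as a simultaneous eigenbasis of $\phimap{n-1}$ with eigenvalue $\omega^{b-c}$ on $t^a u^b v^c$, and then count lattice points on the simplex $a+b+c=n-1$ in each residue class of $b-c$ modulo $n$. Your version is slightly tighter than the paper's: by parametrizing by the actual difference $d\in\{\ell,\ell-n\}$ and using the uniform count $\bigl\lfloor\tfrac{n-1-|d|}{2}\bigr\rfloor+1$, you avoid the paper's four-way case split over $\mathrm{sign}(j-k)$ and parity of $\ell$, and you explicitly handle the $n$ even case, which the paper dismisses as ``similar.'' You also correctly flag that $\ell=0$ contributes only the single difference $d=0$.
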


\begin{proof} Computing the dimension of eigenspace $\El$ is equivalent to counting the number of monomials $t^iu^jv^k \in \C[t,u,v]_{n-1}$ so that $\phimap{n-1}(t^iu^jv^k) = \omega^{\ell}t^iu^jv^k$. On the other hand, applying $\phimap{n-1}$ to this monomial gives $\phimap(t^iu^jv^k) = \omega^{j-k}t^iu^jv^k$. Therefore, we must compute the cardinality of the set  $\{(j,k) \mid j-k\equiv \ell \text{ mod } n, 0 \leq j, k, j+k \leq n-1\}\text{.}
    $
  Let $n$ be odd.  Therefore, to prove $\dim(\El)=\frac{n+1}{2}$, we must really show that for some fixed $\ell$ where $0 \leq \ell \leq n-1$, we have 
        $\# \{(j,k) \mid j-k\equiv \ell \text{ mod } n, 0 \leq j, k, j+k \leq n-1\} =
            \frac{n+1}{2}\text{.}
    $
    Let $\ell$ be even. Then if $j-k \geq 0$, we have $j-k = \ell$. Since $0 \leq j+k \leq n-1$, this implies $0 \leq 2k + \ell \leq n-1$, so $-\frac{\ell}{2} \leq k \leq \frac{n-1-\ell}{2}$. We also have $ 0 \leq k$ and $\ell$ is even, so this means $0 \leq k \leq \frac{n-1-\ell}{2}$. Therefore, there are $\frac{n-1-\ell}{2} - 0 + 1 = \frac{n+1-\ell}{2}$ possibilites when $j-k \geq 0$.
    If $j-k < 0$, then $j-k=\ell-n$ which implies $k-j = n-\ell$ and $k+j = n-\ell+2j$. Since $0 \leq j+k \leq n-1$, we have $0 \leq n-\ell+2j \leq n-1$, so $\frac{\ell-n}{2} \leq j \leq \frac{\ell - 1}{2}$. We also have $0 \leq j$ and $\ell-1$ is odd, so these inequalities imply $0 \leq j \leq \frac{\ell-2}{2}$. Therefore, there are $\frac{\ell-2}{2} + 1 = \frac{\ell}{2}$ possibilities when $j-k <0$. In total, we have counted $\frac{n+1}{2}$ pairs $(j,k)$ when $\ell$ is even.
    \\Now let $\ell$ be odd. Then if $j-k \geq 0$, we have $j-k = \ell$. Since $0 \leq j+k \leq n-1$, this implies $0 \leq 2k + \ell \leq n-1$, so $-\frac{\ell}{2} \leq k \leq \frac{n-1-\ell}{2}$. We also have $ 0 \leq k$ and $\ell$ is odd, so this means $0 \leq k \leq \frac{n-2-\ell}{2}$. Therefore, there are $\frac{n-2-\ell}{2} - 0 + 1 = \frac{n-\ell}{2}$ possibilites when $j-k \geq 0$.
    If $j-k < 0$, then $j-k=\ell-n$ which implies $k-j = n-\ell$ and $k+j = n-\ell+2j$. Since $0 \leq j+k \leq n-1$, we have $0 \leq n-\ell+2j \leq n-1$, so $\frac{\ell-n}{2} \leq j \leq \frac{\ell - 1}{2}$. We also have $0 \leq j$ and $\ell$ is even, so these inequalities imply $0 \leq j \leq \frac{\ell-1}{2}$. Therefore, there are $\frac{\ell-1}{2} + 1 = \frac{\ell+1}{2}$ possibilities when $j-k <0$. In total, we have counted $\frac{n+1}{2}$ pairs $(j,k)$ when $\ell$ is odd.
    Thus, we have shown this set has cardinality $\frac{n+1}{2}$ whether $\ell$ is even or odd. A similar counting argument follows for the case when $n$ is even.

\end{proof}

  Let 
  \begin{equation}\label{I}
  \I \text{ and } \It
  \end{equation}
  denote the space of all degree $n-1$ forms vanishing on the points $S$ in ($\ref{S}$) and $\tilde{S}$ in (\ref{Stilde}) respectively. The next proposition shows there must always exist some eigenvector $h \in \El$ for every $\ell$ which vanishes on points of $S$.


\begin{proposition} \label{dimVL}
 For all $\ell=0, \ldots, n-1$, $\dim \big(\El\cap \I\big) \geq 1$.
 \end{proposition}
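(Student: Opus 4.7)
The plan is to reduce the condition ``vanishing on $S$'' to ``vanishing on the orbit representatives $\tilde{S}$,'' and then to exploit the eigenvector structure of $\El$ to handle the at-infinity point when $n$ is even.

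First, I would establish the equality $\El \cap \I = \El \cap \It$. The inclusion $\subseteq$ is automatic since $\tilde{S}\subseteq S$. For the reverse, let $h \in \El$ vanish on $\tilde{S}$. Since $\phimap{}(h)(t,u,v) = h(\tilde{\Phi}(t,u,v))$, the eigenvector relation $\phimap{}(h) = \omega^{\ell} h$ gives $h(\tilde{\Phi}^k(p)) = \omega^{k\ell} h(p)$ for every $p$ and $k$, so $h$ vanishes on the entire $\tilde{\Phi}$-orbit of each point of $\tilde{S}$. As $S$ is the union of these orbits, $h \in \I$. Therefore it suffices to show $\dim(\El \cap \It) \geq 1$.

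When $n$ is odd, Lemma \ref{dimEigen} gives $\dim\El = (n+1)/2$ while $|\tilde{S}| = (n-1)/2$, and point-evaluation at the points of $\tilde{S}$ imposes at most $|\tilde{S}|$ linear conditions on $\El$, so $\dim(\El \cap \It) \geq (n+1)/2 - (n-1)/2 = 1$. The main obstacle is the case $n$ even: here $|\tilde{S}| = n/2$ and Lemma \ref{dimEigen} only guarantees $\dim\El \geq n/2$, so the naive codimension bound only yields $\dim(\El \cap \It) \geq 0$.

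The key observation I would use to resolve this is that every $h \in \El$ automatically vanishes at the at-infinity representative $p_\infty = [0:u:v]$ of $\tilde{S}$. When $\ell$ is odd, the rotation $\tilde{\Phi}^{n/2}$ fixes $p_\infty$ (it scales the $u$- and $v$-coordinates by $\omega^{\pm n/2} = -1$), so $h(p_\infty) = h(\tilde{\Phi}^{n/2}(p_\infty)) = \omega^{n\ell/2} h(p_\infty) = -h(p_\infty)$, forcing $h(p_\infty) = 0$. When $\ell$ is even, every monomial $t^i u^j v^k$ appearing in $h$ satisfies $j - k \equiv \ell \pmod{n}$ with $\ell$ and $n$ both even, so $j - k$ is even; hence $j + k$ is also even (same parity as $j-k$), and $i = n - 1 - (j+k)$ is odd with $i \geq 1$. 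Thus $t \mid h$, and $h$ vanishes on $\{t=0\}$. In either sub-case the evaluation condition at $p_\infty$ becomes vacuous, so $\dim(\El \cap \It) \geq \dim\El - (n-2)/2 \geq n/2 - (n-2)/2 = 1$. The subtle sub-case is $n$ even with $\ell$ even, where the parity argument forcing $t \mid h$ for every $h \in \El$ is what ultimately closes the gap.
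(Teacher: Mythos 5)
Your reduction from $\I$ to $\It$, your $n$ odd case, and your $n$ even / $\ell$ even case all match the paper's argument. The $\ell$ even sub-case in particular is handled exactly as the paper does: the parity constraint $j-k \equiv \ell \pmod n$ with $n,\ell$ even forces $i = n-1-(j+k)$ to be odd, hence $\geq 1$, so $t \mid h$ and the at-infinity point imposes no condition.

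The $n$ even, $\ell$ odd sub-case, however, contains a genuine error. You write $h(p_\infty) = h\big(\tilde\Phi^{n/2}(p_\infty)\big)$ on the grounds that $\tilde\Phi^{n/2}$ fixes $p_\infty$ in $\mathbb{P}^2$, but $h$ is homogeneous of degree $n-1$, which is \emph{odd} when $n$ is even. Fixing a representative $(0,u_0,v_0)$, the projective identity $[0:-u_0:-v_0]=[0:u_0:v_0]$ only gives $h(0,-u_0,-v_0)=(-1)^{n-1}h(0,u_0,v_0)=-h(0,u_0,v_0)$, not equality. Combining with the eigenvector relation $h(0,-u_0,-v_0)=\omega^{n\ell/2}h(0,u_0,v_0)=(-1)^{\ell}h(0,u_0,v_0)$ yields $-h(0,u_0,v_0)=(-1)^{\ell}h(0,u_0,v_0)$, which forces $h(0,u_0,v_0)=0$ precisely when $\ell$ is \emph{even} — the opposite of what you claim. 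For $\ell$ odd the relation is vacuous, and indeed there are eigenvectors that do not vanish at the infinity point: for $n=4$, $\ell=1$, the monomial $u^2v$ lies in $\Lambda(\omega)_3$ and certainly does not vanish at a generic $[0:u_0:v_0]$. Your sub-case therefore cannot be closed by the argument as written. The conclusion is nevertheless correct, and the fix is the one the paper uses: Lemma \ref{dimEigen} gives the \emph{exact} value $\dim\El = \tfrac{n}{2}+1$ when $n$ is even and $\ell$ is odd (your "$\geq n/2$" undersells it), so the naive bound $\dim\El - |\tilde S| = \tfrac{n}{2}+1-\tfrac{n}{2}=1$ already suffices with no automatic vanishing needed.
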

 
 \begin{proof}
  First we show $\I \cap \El = \It \cap \El$.
  Let $h \in \I$. This means $h$ vanishes on all point of $S$, and since $\tilde{S} \subset S$, $h$ vanishes on all points of $\tilde{S}$. Therefore, $h \in \It$ and we have $\I \cap \El \subseteq \It \cap \El$.
  For the other direction, suppose $h\in \It \cap \El$, we need to show that $h(t,u,v)=0$, for all $[t:u:v]\in S$.
  Every $[t:u:v]\in~S$ belongs to an orbit under the action of rotation, meaning that there exists $[\tilde{t}:\tilde{u}:\tilde{v}] \in \tilde{S}$ with $ [t:u:v]=[\tilde{t}:\omega^{\mu}\tilde{u}:\omega^{-\mu}\tilde{v}]$ for some integer $\mu$. Then we have 
        $h(t,u,v)=h(\tilde{t},\omega^{\mu}\tilde{u},\omega^{-\mu}\tilde{v})
              = \omega^{\mu \ell} h(\tilde{t},\tilde{u},\tilde{v})=0  $, so $\It \cap \El \subseteq I(S)_{n-1} \cap \El$.
 Therefore, $\I \cap \El = \It \cap \El$. More importantly, these sets have the same dimension.
 When $n$ is odd, $\dim(\El)=\frac{n+1}{2}$ from Lemma \ref{dimEigen} and $|\tilde{S}| = \frac{n-1}{2}$. 
 Then 
 \begin{align*}
     \dim(\I \cap \El) &= \dim(\It \cap \El) \\
     &\geq \dim(\El) - |\tilde{S}| \\
     &= \frac{n+1}{2} - \frac{n-1}{2} = 1\text{.}
     \end{align*}
 When $n$ is even and $\ell$ is odd, $\dim(\El)=\frac{n}{2}+1$ from Lemma \ref{dimEigen} and $|\tilde{S}| = \frac{n}{2}$. This implies
 \begin{align*}
     \dim(\I \cap \El) \geq \dim(\El) - |\tilde{S}| = \frac{n}{2}+1 - \frac{n}{2} = 1\text{.}
     \end{align*}
 Now consider the case when $n$ and $\ell$ are both even. We have $\dim(\El)=\frac{n}{2}$ from Lemma \ref{dimEigen}, but the monomials $t^iu^jv^k$ so that $j-k=\ell$ and $i+j+k = n-1$ also satisfy $i \geq 1$ since $j-k$ is even, but $n-1$ is odd. This means all of the monomials in each $V(\omega^{\ell})$ in this case must have a factor of $t$. So, we need not consider the points with $t=0$. Therefore, $|\tilde{S}\backslash\{[0:u:v]\}| = \frac{n}{2}-1$ and
 \begin{align*}
     \dim(\I \cap \El) \geq \dim(\El) - |\tilde{S}| = \frac{n}{2} - \left(\frac{n}{2}-1\right) = 1\text{.}
     \end{align*}
 In every case we have shown $\dim(\I \cap \El) \geq 1$ and this completes the proof.
 \end{proof}

Another stipulation for the Hermitian construction is that the $2 \times 2$ minors of $\adj(M)$ must lie in the ideal $\langle f \rangle$. To ensure this is possible alongside the aforementioned eigenspace requirement, we use a fact due to Max Noether. 
This fact is developed mainly in the language of divisors. For information on divisors, see \cite{fulton}. The next lemma not only allows us to write the $2 \times 2$ minors as elements of $\langle f \rangle$, but also to choose each entry of $\adj(M)$ in an appropriate eigenspace of $\phimap{n-1}$.
    \begin{lemma} \label{symmofg}
       Suppose $f \in \phieig{1}$, $g \in \phieig{1}$ and $h \in \phieig{\omega^{\ell}}$ are homogeneous with $\mathcal{V}_{\C}(f)$ smooth where $\deg(h) > \deg(f), \deg(g)$  and $g$ and $h$ have no irreducible components in common with $f$. If $\mathcal{V}(f,g)$ consists of distinct points and $\mathcal{V}_{\C}(f,g) \subseteq \mathcal{V}(f,h)$, then there exists homogeneous $\hat{a}, \hat{b} \in \phieig{\omega^{\ell}}$ so that $h=\hat{a}f+\hat{b}g$ where $\deg\big(\hat{a}\big) = \deg(h)-\deg(f)$ and $\deg\big(\hat{b}\big) = \deg(h)-\deg(g)$. Additionally, if $f,g$, and $h$ are real, then $\hat{a}$ and $\hat{b}$ can be chosen real.
    \end{lemma}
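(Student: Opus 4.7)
The plan is to invoke the classical Max Noether AF+BG theorem to obtain an a priori uncontrolled decomposition $h = af + bg$, and then apply two successive averaging arguments — one with respect to the cyclic action of $\phimap{}$ to put $\hat{a},\hat{b}$ into the eigenspace $\phieig{\omega^{\ell}}$, and (if needed) one with respect to the conjugation involution $\sigma$ to recover reality.

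First, I would verify the hypotheses of Max Noether's AF+BG theorem (as in Fulton's \emph{Algebraic Curves}). Because $f$ and $g$ share no common component, $f$ is smooth, and $\mathcal{V}_{\C}(f,g)$ consists of distinct points, each intersection point has multiplicity one; transversality of $g$ to the smooth curve $\mathcal{V}_\C(f)$ then makes the local ideal $\langle f_P, g_P\rangle$ equal to the maximal ideal of $\mathcal{O}_P$ at every $P \in \mathcal{V}(f,g)$. So the containment $\mathcal{V}_{\C}(f,g) \subseteq \mathcal{V}(f,h)$ upgrades automatically to the required local condition $h_P \in \langle f_P, g_P\rangle$. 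The theorem then produces homogeneous $a,b \in \C[t,u,v]$ with $h = af + bg$, $\deg a = \deg h - \deg f$, $\deg b = \deg h - \deg g$.

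Next, I would symmetrize using $\phimap{}$. Since $\phimap{}$ is a degree-preserving ring automorphism fixing both $f$ and $g$, and since $\phimap{}^k(h) = \omega^{\ell k} h$, applying $\phimap{}^k$ to $h = af + bg$ and multiplying by $\omega^{-\ell k}$ gives
\[
h \;=\; \omega^{-\ell k}\phimap{}^k(a)\,f \;+\; \omega^{-\ell k}\phimap{}^k(b)\,g.
\]
Averaging over $k = 0,1,\dots,n-1$, set
\[
\hat{a} \;=\; \frac{1}{n}\sum_{k=0}^{n-1} \omega^{-\ell k}\phimap{}^k(a), \qquad \hat{b} \;=\; \frac{1}{n}\sum_{k=0}^{n-1} \omega^{-\ell k}\phimap{}^k(b).
\]
Then $\hat{a}f + \hat{b}g = h$, and a direct shift of index shows $\phimap{}(\hat{a}) = \omega^{\ell}\hat{a}$ and likewise for $\hat{b}$, so $\hat{a},\hat{b} \in \phieig{\omega^{\ell}}$. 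Degrees are preserved since each $\phimap{}^k$ preserves total degree.

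Finally, for the reality assertion, suppose $f,g,h$ are fixed by the conjugation involution $\sigma(p)(t,u,v) = \overline{p(\bar t,\bar v,\bar u)}$. Applying $\sigma$ to Max Noether's decomposition yields $h = \sigma(a)f + \sigma(b)g$, so replacing $a,b$ with their $\sigma$-symmetrizations $\tfrac{1}{2}(a+\sigma(a))$, $\tfrac{1}{2}(b+\sigma(b))$ makes them real before we average. A short check gives the relation $\phimap{}\sigma = \sigma\phimap{}^{-1}$; combined with the anti-linearity of $\sigma$, this implies $\sigma(\hat{a}) = \hat{a}$ (and similarly for $\hat{b}$) when $a,b$ are already real, so the $\phimap{}$-averaging preserves reality. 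The main obstacle I anticipate is packaging the local hypothesis of Max Noether cleanly — once the transversality argument at each intersection point is in place, the rest is a two-step orthogonal-projection-style averaging over the finite groups $\langle \phimap{}\rangle$ and $\langle \sigma\rangle$ that commutes correctly because $\phimap{}\sigma = \sigma\phimap{}^{-1}$.
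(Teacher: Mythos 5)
Your core approach matches the paper's: invoke Max Noether's $AF+BG$ theorem to get an uncontrolled decomposition $h=af+bg$, then Reynolds-average over the cyclic action of $\phimap{}$ to land $\hat a,\hat b$ in $\phieig{\omega^\ell}$; your verification that the averaging lands in the right eigenspace is the same index-shift computation the paper uses. The two places you differ are both in the ``extras.'' First, you fill in the local-to-global justification for Max Noether's hypotheses (transversality at each of the $\deg f\cdot\deg g$ reduced intersection points on the smooth curve making the local ideal maximal), which the paper leaves implicit. Second, and more substantively, the paper simply cites Theorem~4.1 of Plaumann--Vinzant for the reality assertion, whereas you prove it directly by $\sigma$-symmetrizing $a,b$ before $\phimap{}$-averaging. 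That is a perfectly reasonable self-contained route.

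However, there is an error in your reality argument: the claimed relation $\phimap{}\sigma=\sigma\phimap{}^{-1}$ is false. In fact $\phimap{}$ and $\sigma$ \emph{commute}. On a monomial, $\phimap{}(t^au^bv^c)=\omega^{b-c}t^au^bv^c$ and $\sigma(t^au^bv^c)=t^au^cv^b$, so both $\phimap{}\sigma$ and $\sigma\phimap{}$ send $t^au^bv^c\mapsto\omega^{c-b}t^au^cv^b$, while $\sigma\phimap{}^{-1}$ gives $\omega^{b-c}t^au^cv^b$. With the correct relation, starting from real $a',b'$, one computes $\sigma(\hat a)=\frac1n\sum_k\omega^{\ell k}\phimap{}^k(a')$, which is the projection onto $\phieig{\omega^{-\ell}}$, not onto $\phieig{\omega^{\ell}}$. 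So your argument that ``$\phimap{}$-averaging preserves reality'' does \emph{not} go through for arbitrary $\ell$. The gap is salvaged only by an observation you did not make explicit: since $\sigma$ carries $\phieig{\omega^{\ell}}$ to $\phieig{\omega^{-\ell}}$, a nonzero $\sigma$-invariant (real) element of $\phieig{\omega^{\ell}}$ can exist only when $\omega^{2\ell}=1$, i.e.\ $\ell=0$ or $\ell=n/2$. Those are exactly the cases in which the reality clause of the lemma is non-vacuous (and the only ones the construction uses, namely the diagonal entries $g_{ii}$ with $\ell=0$), and there $\omega^{\ell k}=\omega^{-\ell k}$ so the two projections coincide and your argument works. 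You should replace the false commutation claim with the correct one and add the observation that the hypothesis already forces $\omega^{2\ell}=1$.
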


\begin{proof}
     For reality of $\hat{a}$ and $\hat{b}$, see Theorem~$4.1$ of \cite{PlaumannVinzant}. By Max Noether's fundamental theorem~\cite{fulton}, there exists homogeneous $a, b \in \C[t,u,v]$ so that $h=af+bg$ where $\deg(a)=\deg(h)-\deg(f)$ and $\deg(b)=\deg(h)-\deg(g)$  since $\mathcal{V}(f,g) \subseteq \mathcal{V}(f,h)$ and $\mathcal{V}(f,g)$ consists of distinct points. Next, $h \in \phieig{\omega^{\ell}}$ implies $h = \omega^{-\ell} \phimap{}(h)$. Then 
        $$
        h = \dfrac{1}{n}\sum \limits_{i=0}^{n-1} \omega^{-\ell i}\phimap{}^i(h) = f\cdot\left(\dfrac{1}{n}\sum \limits_{i=0}^{n-1} \omega^{-\ell i}\phimap{}^i(a)\right) + g\cdot\left(\dfrac{1}{n}\sum \limits_{i=0}^{n-1} \omega^{-\ell i}\phimap{}^i(b)\right)\text{.}$$
        Let $\hat{b} = \dfrac{1}{n}\sum \limits_{i=0}^{n-1} \omega^{-\ell i}\phimap{}^i(b)$. Now we show that $\phimap{}(\hat{b}) = \hat{b}$. Applying the map, we have
        \begin{align*}
            \phimap{}\big(\hat{b}\big) = \phimap{}\left(\dfrac{1}{n}\sum \limits_{i=0}^{n-1} \omega^{-\ell i}\phimap{}^i(b)\right)
                &= \dfrac{1}{n}\sum \limits_{i=0}^{n-1} \omega^{-\ell i}\phimap{}^{i+1}(b) \\
                    & = \dfrac{1}{n}\sum \limits_{i=1}^{n} \omega^{-\ell (i-1)}\phimap{}^i(b) \\
                    & = \omega^\ell \left(\dfrac{1}{n} \sum \limits_{i=1}^{n} \omega^{-\ell i} \phimap{}^i(b)\right) \\
                    & = \omega^\ell \left(\dfrac{1}{n} \sum \limits_{i=1}^{n-1} \left( \omega^{-\ell i} \phimap{}^i(b)\right) + \omega^{-\ell n}\phimap{}^n(b) \right) \\
                    & = \omega^\ell \left(\dfrac{1}{n} \sum \limits_{i=1}^{n-1} \left( \omega^{-\ell i} \phimap{}^i(b)\right) + \omega^{0}\phimap{}^0(b) \right) \\
                    & = \omega^{\ell} \hat{b} \text{.}
        \end{align*}
        This means $\phimap{}\big(\hat{b}\big)=\omega^\ell \hat{b}$ and $\hat{b} \in \phieig{\omega^{\ell}}_{\deg(b)}$. The polynomial $\hat{a}$ is chosen in a similar fashion.
\end{proof}

\section{The Smooth Case} \label{smooth}




Now, with proper choices, we can recover a determinantal representation and its associated cyclic weighted shift matrix for a given curve invariant under $C_n$. Below we list the steps of Dixon's construction for Hermitian representations,  italicize our manipulations, and prove the main theorem for smooth curves with facts from Section \ref{smooth}.

\vspace{3mm}
\begin{construction} \rm Let $f \in \R[t,x,y]^{C_n}_n$ be hyperbolic with respect to $(1,0,0)$ with $f(1,0,0)=1$.
        \begin{enumerate}
            \item Write $f \in \R[t,u,v]_n^{C_n}$ as in (\ref{fInvz}) and let $g_{11}= \frac{\partial f}{\partial t}$ be of degree $n-1$.
            \item Split the $n(n-1)$ points of $\xi\left(\mathcal{V}_{\mathbb{C}}(f, g_{11})\right)$ into two conjugate sets of points $S \cup \overline{S}$ \emph{with $\frac{n-1}{2}$ sets of orbits of points in $S$ when $n$ is odd and $\frac{n}{2}$ sets of orbits of points in $S$ when $n$ is even.}
            \item Extend $g_{11}$ to a linearly independent set $\{g_{11}, g_{12}, \dots, g_{1n}\}$ of forms in $\mathbb{C}[t,u,v]_{n-1}$ vanishing on all points of $S$ \emph{with $g_{1j} \in \E{\omega^{1-j}}$ for all $j \in [n]$. }
            \item For $1 < i \leq j$, let $g_{ij}$ be a form for which  $g_{11} g_{ij} - \overline{g_{1i}} g_{1j}$ lies in the ideal generated by $f$, \emph{with $g_{ij} \in \E{\omega^{i-j}}$} and $g_{ii} \in \R[t,u,v]$.
              \item   For $i<j$, set $g_{ji} = \overline{g_{ij}}$ and define $G=(g_{ij})$  to be the resulting $n\times n$ matrix.
  \item   Define $M$ to be the matrix of linear forms obtained by dividing each entry of $\adj({G})$ by $f^{n-2}$. 
  \item Normalize $M$ so all diagonal entries are monic in $t$.
        \end{enumerate}
\end{construction}

    \begin{proof}[Proof of Theorem \ref{main}(a) (Smooth case)]
            Assume $\mathcal{V}_{\C}(f)$ is smooth. The goal is to show there exists cyclic weighted shift matrix $A \in \C^{n \times n}$ such that
            \begin{equation}\label{desired}
                f(t,u,v) =f_A(t,u,v) = \det\left( \begin{matrix}
t      & \frac{a_1}{2}v     & 0      & \cdots & 0  & \frac{\overline{a_n}}{2}u   \\
\frac{\overline{a_1}}{2}u       & t       & \ddots    & 0      & \cdots  & 0       \\
0 & \ddots  & \ddots & \ddots & \ddots  & \vdots  \\
\vdots & 0  & \ddots & \ddots & \ddots  & 0       \\
0      & \vdots       & \ddots & \ddots & \ddots  & \frac{a_{n-1}}{2}v \\
\frac{a_n}{2}v  & 0       & \cdots & 0 & \frac{\overline{a_{n-1}}}{2}u  & t      \\  
\end{matrix} \right)\text{.}
            \end{equation}
            We will construct a matrix $G$ of forms of degree $n-1$ and recover the desired representation by taking the adjugate. Let $g_{11} = \frac{\partial f}{\partial t}$. Split $\xi\left(\mathcal{V}_{\C}(f,g_{11})\right)$ into $ S \cup \overline{S}$ so $S$ consists of the appropriate number of orbits of points for odd and even $n$ as in (\ref{S}). All of these points are distinct by Proposition \ref{distinct}. For Step 3, Proposition \ref{dimEigen} allows us to choose $g_{1j} \in \E{\omega^{1-j}}$ which vanishes on all points of $S$ for each $j=1, \ldots, n$. Now let $g_{j1} = \overline{g_{1j}}$. These entries vanish on all points in $\overline{S}$ and $\mathcal{V}(f,g_{11}) \subseteq \mathcal{V}(f, g_{1j}g_{j1})$. By Lemma \ref{symmofg}, we can choose $g_{ij} \in \E{\omega^{i-j}}$ for $1 < i < j$ so that $g_{11}g_{ij}-\overline{g_{1i}}g_{1j} =af$ for some homogeneous $a \in \mathbb{C}[t,u,v]$ to complete Step 4. Let $g_{ij} = \overline{g_{ji}}$ for $i > j$ and let $G = (g_{ij})$ be the $n \times n$ matrix of forms of degree $n-1$. 
Next consider the restriction $\phimap{1}$ of $\phimap{}$ to  $\mathbb{C}[t,u,v]_1$. The eigenvalues of this restriction are $1, \omega$, and $\omega^{n-1}$ with associated eigenspaces $\F{1}$, $\F{\omega}$, and $\F{\omega^{n-1}}$. Since each $2 \times 2$ minor of $G$ lies in the ideal $\langle f \rangle$, each entry of $\adj(G)$ will be divisible by $f^{n-2}$ by Theorem~$4.6$ of \cite{PlaumannVinzant} and Step 6 is valid. Let $M = \frac{\adj(G)}{f^{n-2}}$.
The entries in $G$ have degree $n-1$, so the entries of $\adj(G)$ have degree $(n-1)^2$. Then $f^{n-2}$ has degree $n(n-2)$, so entries of $M$ are linear in $t, u,$ and $v$. 
Applying the map $\phimap{}$ to the $ij$-th entry of $M$, we have
            \begin{align*}
            \phimap{}(M)_{ij} &= f^{2-n} \phimap{}(\adj(G))_{ij}\\
            &= f^{2-n} \adj({\phimap{}}(G))_{ij}\\
            &= f^{2-n} \adj(\Omega G\Omega^{\ast})_{ij}\\
            &= f^{2-n} (\adj(\Omega^{\ast})\adj(G)\adj(\Omega))_{ij}\\
            &= (\Omega M\Omega^{\ast})_{ij}\\
            &= \overline{\omega^{j-1}}\omega^{i-1}M_{ij} \\
            &=\omega^{i-j}M_{ij}\text{.}
            \end{align*}       
			Therefore, $M_{ij} \in \F{\omega^{i-j}}$ for each $i,j$. This implies $M_{ij}=0$ if $|i-j| \neq 0, 1$. Now consider the entries of $M$ so that $i-j=0$ or $1$. These are the entries in the main and upper diagonals of $M$ as well as the $M_{1n}$  entry. For $M_{ij}$ such that $j=i+1$ and $i=1, j=n$, we have $M_{ij}\in \F{\omega^{n-1}}$ so these are multiples of $v$. Since $M$ is Hermitian, this implies $M_{ji} \in \F{\omega}$, so these entries are multiples of $u$. 
 Also, $M_{ii} \in \F{1}$, so the diagonal elements must be multiples of $t$. More explicitly, $M_{ii} = c_{i}t$ for some scalars $c_i$. To normalize as in Step 7, replace $M$ by $DMD$ where $D=\text{diag}\left(\frac{1}{\sqrt{c_{1}}},\ldots,\frac{1}{\sqrt{c_{n}}}\right)$ so that the coefficients of $t$ are $1$. Thus, the matrix $M$ can be reduced to the form (\ref{desired}).
	\end{proof}

\begin{example}[Quintic]
    \rm We will compute a determinantal representation of $$f(t,u,v)=t^5-\frac{25}{2}t^3uv+\frac{135}{4}t(uv)^2-3\sqrt{3}\left(1+\sqrt{2}\right)\left(\frac{u^5+v^5}{2}\right)+3\sqrt{3}\left(1-\sqrt{2}\right)\left(\frac{u^5-v^5}{2i}\right)$$ and identify the associated cyclic weighted shift matrix. Let $g_{11} = \frac{\partial f}{\partial t} = 5t^4 -\frac{75}{2}t^2uv+\frac{135}{4}(uv)^2$ and compute the points of $\mathcal{V}_{\C}(f,g)$. Split the points into $S\cup\overline{S}$ so $$\tilde{S} = \left\{ [1:-0.6426 + 0.4343 i: -1.0213 - 0.6902 i], [1:-0.4195 - 0.0142 i: -0.3689 + 0.0125 i] \right\}$$ is the set of orbit representatives of $S$ as in (\ref{Stilde}). Choose $g_{12} = -t^3v + 3tuv^2 - \frac{3\sqrt{3}}{2}\left(1-i\right)\left(\sqrt{2}+i\right)u^4$ which lies in $\Equintic{\omega^{-1}}$ and vanishes on $S$. Now for $j \leq 5$, choose $g_{1j} \in \Equintic{\omega^{1-j}}$ so $g_{1j}$ vanishes on the points of $S$ and set $g_{j1}=\overline{g_{1j}}$. Write $$g_{12}g_{21} = \left(-t^3+3tuv\right)f + \left(t^4-7t^2uv+6(uv)^2\right)g_{11} \in \langle f,g_{11} \rangle$$ and let $g_{22} = t^4-7t^2uv+6(uv)^2 \in \Equintic{1}$. For every other $i\leq j$, write $g_{1i}g_{j1} = af + bg_{11}$ for $a,b \in \C[t,u,v]$ where $b \in \Equintic{\omega^{i-j}}$ and set $g_{ij}=b, g_{ji}=\overline{b}$. Denote $G$ the matrix with $g_{ij}$ entries, take the adjugate of $G$ to get a matrix with entries in $\C[t,u,v]_{16}$, and divide each entry by $f^3$. One of the resulting representations is $10000 \cdot f(t,u,v) = \det(M)$ where
 $$M = \left( \begin{matrix}
2t      & 2\sqrt{5} v     & 0      & 0      & -4i\sqrt{5} u      \vspace{1mm}\\
2\sqrt{5} u      & 10 t       & \frac{15+15i}{\sqrt{2}} v    & 0      &  0      \vspace{1mm} \\
0 & \frac{15-15i}{\sqrt{2}} u & 5 t & 5\sqrt{3} v & 0 \vspace{1mm}\\
0 & 0 & 5\sqrt{3} u & 10 t & 5(\sqrt{2}+2i) v \vspace{1mm}\\
4i\sqrt{5} v & 0 & 0 & 5(\sqrt{2}-2i) u & 10 t
\end{matrix} \right) \text{.}$$
Finally, let $D = \text{diag}\left(\frac{1}{\sqrt{2}},\frac{1}{\sqrt{10}},\frac{1}{\sqrt{5}},\frac{1}{\sqrt{10}},\frac{1}{\sqrt{10}}\right)$, and normalize $M$ so that 
    $$f(t,u,v) = \det(DMD) = \det\left(tI_5 + uA^{\ast}+vA \right)$$ where $A = S\left(2, 3+3i, \sqrt{6}, \sqrt{2}+2i, -4i\right)$ is the associated cyclic weighted shift matrix.
\end{example}

\section{The Singular Case} \label{singular}

We have shown this construction holds when $\mathcal{V}_{\C}(f)$ is smooth, but it still remains valid if $\mathcal{V}_{\C}(f)$ is singular. In particular, the case of real singularities may be solved by reducing to a univariate argument. The next lemma is essential in this reduction. We believe  this result has been shown elsewhere, but include the proof here for completeness.
        \begin{lemma} \label{realroots}
        Let $p \in \mathbb{R}[t]$ and $a<b$ for $a, b  \in \mathbb{R}$. If $p(t)+a$ and $p(t)+b$ have all real roots, then $p(t)+c$ has all real, distinct roots for $c \in (a,b)$.
    \end{lemma}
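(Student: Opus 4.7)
The plan is to analyze the count of real roots of $p(t)+c$ as a function of $c$ via the geometry of the graph of $p:\mathbb{R}\to\mathbb{R}$. Since $p(t)+a$ has $n$ real roots counted with multiplicity, Rolle's theorem forces $p'$ to have all $n-1$ of its roots real, so $p$ has only real critical points and is piecewise monotone on $\mathbb{R}$.

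Next I would track the function $N(y):=\#\{t\in\mathbb{R}:p(t)=y\}$ (counted with multiplicity) as $y$ varies over $\mathbb{R}$. Between consecutive critical values of $p$, $N$ is constant; as $y$ decreases past a local minimum value of $p$, $N$ decreases by $2$; past a local maximum value, $N$ increases by $2$; and past an inflection-type critical value (where $p'$ has a root of even multiplicity and therefore does not change sign), $N$ exhibits an isolated upward spike while its values on either side remain equal. At every $y$ the bound $N(y)\le n$ holds.

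The hypothesis gives $N(-a)=N(-b)=n$ with $-a>-b$. The key observation is that no critical value of $p$ may lie in the open interval $(-b,-a)$: a local maximum value or an inflection value there would push $N$ above $n$, and without any compensating upward crossings a local minimum value there would drop $N$ below $n$ and prevent it from recovering to $n$ at $-b$. Consequently $N(y)=n$ throughout $[-b,-a]$, so $p(t)+c$ has all real roots for every $c\in[a,b]$. For $c\in(a,b)$ the value $-c$ lies strictly between consecutive critical values of $p$, so $p(t)+c$ and $p'(t)$ share no common zero, forcing the $n$ real roots of $p(t)+c$ to be distinct.

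The main technical step is the careful case analysis for how $N$ changes at each type of critical point, especially the exclusion of inflection-type jumps inside the real-rooted plateau; this is handled uniformly by the upper bound $N\le n$ as indicated above.
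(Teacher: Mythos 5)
Your level-set counting approach via $N(y):=\#\{t\in\mathbb{R}:p(t)=y\}$ is a reasonable reformulation, and the conclusion you draw in the last paragraph (once ``no critical value lies in $(-b,-a)$'' is established, $-c$ lies strictly between consecutive critical values, so $p(t)+c$ and $p'$ share no zero and the $n$ real roots are distinct) is exactly right. However, the central step is not actually ``handled uniformly by the upper bound $N\le n$,'' and this is a genuine gap. Track $y$ decreasing from $-a$ to $-b$: crossing a local maximum value raises $N$ by $2$, which indeed contradicts $N\le n$ immediately. But crossing a local \emph{minimum} value only drops $N$ to $n-2$, and a subsequent local maximum value (smaller $y$, still inside $(-b,-a)$) would raise $N$ back to $n$ without ever exceeding $n$. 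Your sentence about ``compensating upward crossings'' gestures at this but does not rule it out; the bound $N\le n$ alone is silent on the dip-and-recover pattern. (A side remark: the hypothesis $a<b$ forces $p'$ to have only simple real roots---a repeated root of $p'$ would be a root of multiplicity $\ge 3$ of \emph{both} $p+a$ and $p+b$, impossible---so inflection-type critical values never occur here; that part of your taxonomy is moot.)

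To close the gap you need the structural fact that $\{y:N(y)=n\}$ is precisely the closed interval $[\max(\text{local min values}),\,\min(\text{local max values})]$, whose interior contains no critical value. One way to get this: for $n$ even, $N(y)=2$ for $y$ above all critical values, and the net change in $N$ from $+\infty$ down to $-a$ equals $2(\#\text{local max values}>-a)-2(\#\text{local min values}>-a)$; forcing this to equal $n-2$ with only $\frac{n-2}{2}$ local maxima available pins down that \emph{every} local max value is $\ge -a$ and \emph{no} local min value is $> -a$. This is in essence what the paper's proof does, more concretely: it orders the critical points $r_1<\dots<r_{n-1}$, uses the end behavior of $p$ to see they alternate strictly min/max, shows $p(r_k)+a\le 0$ at local minima and $p(r_k)+a\ge 0$ at local maxima (and likewise for $b$), concludes the strict inequalities $p(r_k)+c<0$ and $p(r_k)+c>0$ for $c\in(a,b)$, and then applies the Intermediate Value Theorem to extract $n$ distinct roots interlaced with the $r_k$. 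Your reformulation is a different lens on the same phenomenon, but as written it omits the argument that actually excludes the compensating-crossings scenario, whereas the paper's pointwise sign tracking handles it directly.
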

    
    \begin{proof}
    Suppose $p(t)+a$ and $p(t)+b$ have all real roots. This implies $p'(t)$ must have $n-1$ real roots $r_1,\ldots, r_{n-1}$.
    Suppose $n$ is even. Then we have $\lim \limits_{t\to - \infty} p(t)+a = \infty$. This means that there must be a local minimum at $r_1$ due to the shape of the graph of $p(t)+a$. We also know $p(r_1)+a \leq 0$ or else $p(t)+a$ has imaginary roots. Similarly, there must be a local maximum at $r_2$ and $p(r_2)+a\geq 0$. Continuing in this fashion, $p(r_k)+a \leq 0$ is a local minimum for all odd $k$ and $p(r_k)+a \geq 0$ is a local maximum for all even $k$ where $k = 1,\ldots n-1$. The same argument holds for $p(t)+b$, so we have $p(r_k)+a < p(r_k)+b \leq 0$ for all odd $k$ and $p(r_k)+b > p(r_k)+a \geq 0$ for all even $k$ where $k = 1,\ldots n-1$ because $a < b$. More importantly, we have $c \in (a,b)$, so $p(r_k)+c < 0$ for all odd $k$ and $p(r_k)+c > 0$ for all even $k$.
	We have $\lim \limits_{t\to - \infty} p(t)+c = \infty$ and $p(r_1)+c < 0$. By Intermediate Value Theorem, there exists $s_1 \in (-\infty,r_1)$ such that $p(s_1)+c=0$. Similarly, since $p(r_1)+c < 0$ and $p(r_2)+c > 0$, there exists $s_2 \in (r_1,r_2)$ such that $p(s_2)+c=0$. Continue in this fashion. We have $p(r_{n-1})+c < 0$ and $\lim \limits_{t\to \infty} p(t)+c = \infty$. Then there exists $s_n \in (r_{n-1},\infty)$ such that $p(s_n)+c=0$. We have $p(s_i)+c=0$ for $i=1,\ldots,n$, so $p(t)+c$ has $n$ real roots.
	\\Now suppose $n$ is odd. Since $\lim \limits_{t\to - \infty} p(t)+c = - \infty$, we have $p(r_k)+a \geq 0$ is a local maximum for odd $k$ and $p(r_k)+a \leq 0$ is a local minimum for even $k$ where $k=1,\ldots,n-1$. A similar argument follows in the same way as the even case. Therefore, $p(t)+c$ has all real roots for $c \in (a,b)$.
    The roots of $p(t)+c$ are $s_1, s_2, \ldots, s_n$ such that $s_1 < r_1 < s_2 < r_2 < \ldots < r_{n-1} < s_n$ and, more importantly, are distinct.
    \end{proof}
    
The hyperbolicity of $f \in \R[t,x,y]_{n}$ is equivalent to real rootedness of two univariate polynomials and any real singularities of $\mathcal{V}(f)$ are related to repeated roots of these polynomials.

    \begin{proposition} \label{allrealroots}
        The polynomial $f$ is hyperbolic if and only if $t^n + \sum_{r=1}^{\floor*{\frac{n}{2}}} c_r t^{n-2r} \pm \sqrt{c_0^2 + \tilde{c_0}^2}$ have all real roots.
    \end{proposition}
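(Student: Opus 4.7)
The plan is to reduce hyperbolicity to a one-parameter family of univariate real-rootedness problems by applying the criterion from (\ref{hyp}), namely that $f$ is hyperbolic iff $f(t, e^{i\theta}, e^{-i\theta})$ has all real roots for every $\theta \in [0, 2\pi)$. Starting from the form (\ref{fInvz}), I would substitute $u = e^{i\theta}$, $v = e^{-i\theta}$; then $uv = 1$, while $\tfrac{u^n + v^n}{2} = \cos(n\theta)$ and $\tfrac{u^n - v^n}{2i} = \sin(n\theta)$. Writing $p(t) := t^n + \sum_{r=1}^{\floor{n/2}} c_r t^{n-2r}$ and $R := \sqrt{c_0^2 + \tilde{c_0}^2}$, this gives
\begin{equation*}
f(t, e^{i\theta}, e^{-i\theta}) = p(t) + c_0 \cos(n\theta) + \tilde{c_0}\sin(n\theta).
\end{equation*}
The trigonometric piece can be rewritten as $R\cos(n\theta - \phi)$ for some phase $\phi$, so as $\theta$ ranges over $[0, 2\pi)$ the value $c := c_0\cos(n\theta) + \tilde{c_0}\sin(n\theta)$ sweeps out the entire interval $[-R, R]$ and achieves each endpoint. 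Hence hyperbolicity is equivalent to $p(t) + c$ having all real roots for every $c \in [-R, R]$.

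For the forward direction, simply choose the value of $\theta$ realizing $c = R$ and the one realizing $c = -R$; these are exactly the polynomials $p(t) \pm R$ in the statement, so both must have all real roots. For the backward direction, assume $p(t) + R$ and $p(t) - R$ both have all real roots. Then Lemma \ref{realroots} (with $a = -R$, $b = R$) immediately gives that $p(t) + c$ has all real roots for every $c \in (-R, R)$, and together with the two endpoint cases we conclude real-rootedness for all $c \in [-R, R]$. Equivalently, $f(t, e^{i\theta}, e^{-i\theta})$ has all real roots for every $\theta$, which is the hyperbolicity of $f$.

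There is no real obstacle here: the only subtlety is correctly identifying the maximum and minimum of $c_0 \cos(n\theta) + \tilde{c_0}\sin(n\theta)$ as $\pm R$ (standard amplitude-phase reduction) and noting that these extremes are actually attained, so both endpoints $p(t) \pm R$ appear as specializations of $f(t, e^{i\theta}, e^{-i\theta})$. Once this is set up, one direction is trivial and the other is a direct invocation of Lemma \ref{realroots}.
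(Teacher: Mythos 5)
Your proof is correct and follows essentially the same approach as the paper: both reduce hyperbolicity via the criterion in~(\ref{hyp}) to real-rootedness of $p(t)+c_0\cos(n\theta)+\tilde{c_0}\sin(n\theta)$, rewrite the trigonometric term in amplitude--phase form as $\sqrt{c_0^2+\tilde{c_0}^2}\cos(\alpha-n\theta)$, and invoke Lemma~\ref{realroots} to handle the interior of the interval $[-R,R]$. You are slightly more explicit than the paper in spelling out the forward direction (that the extremes $\pm R$ are actually attained as specializations), which is a reasonable addition but not a departure.
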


    \begin{proof}
        By definition, $f$ is hyperbolic with respect to $(1,0,0)$ if and only if $f(t,\cos(\theta), \sin(\theta))$ has all real roots for all $\theta \in [0,2\pi)$. Then
     \begin{align*}
     f(t,\cos(\theta), \sin(\theta)) &= t^n + \sum_{r=1}^{\floor*{\frac{n}{2}}} c_r t^{n-2r} + c_0 \cos(n\theta)+\tilde{c_0}\sin(n\theta) \\
     &= t^n + \sum_{r=1}^{\floor*{\frac{n}{2}}} c_r t^{n-2r} + \sqrt{c_0^2+\tilde{c_0}^2} \left(\cos(\alpha)\cos(n\theta)+\sin(\alpha)\sin(n\theta)\right)\\
     &= t^n + \sum_{r=1}^{\floor*{\frac{n}{2}}} c_r t^{n-2r} + \sqrt{c_0^2+\tilde{c_0}^2}\cos(\alpha-n\theta)
     \end{align*}
    where $\frac{c_0}{\sqrt{c_0^2+\tilde{c_0}^2}} = \cos(\alpha)$ and $\frac{\tilde{c_0}}{\sqrt{c_0^2+\tilde{c_0}^2}} = \sin(\alpha)$ for some $\alpha \in [0, 2\pi)$.  By Lemma \ref{realroots} and since $-1 \leq \cos(\alpha-n\theta)\leq 1$, it is enough to check if $t^n + \sum_{r=1}^{\floor{\frac{n}{2}}} c_r t^{n-2r} \pm \sqrt{c_0^2+\tilde{c_0}^2}$ each have all \vspace{1mm}real roots to show $f(t,\cos(\theta), \sin(\theta))$ has all real roots for every $\theta \in [0,2\pi)$.
    \end{proof}

    \begin{lemma} \label{multipleroots}
       If $f$ is hyperbolic where $\mathcal{V}(f)$ has a real singularity, then at least one of $t^n+\sum_{r=1}^{\floor{\frac{n}{2}}} c_r t^{n-2r} \pm \sqrt{c_0^2+\tilde{c_0}^2}$ has a repeated root.
    \end{lemma}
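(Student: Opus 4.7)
The plan is to translate the singular-point conditions on $f$ into conditions on the one-parameter family of univariate polynomials $t \mapsto f(t,\cos\theta,\sin\theta)$, whose closed form was already computed in the proof of Proposition \ref{allrealroots}.

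First I would normalize the singularity. Let $[t_0:x_0:y_0]$ be a real singular point of $\mathcal{V}(f)$. Because $f(1,0,0)=1\neq 0$, we must have $(x_0,y_0)\neq (0,0)$, so after rescaling the projective representative we may assume $(x_0,y_0)=(\cos\theta_0,\sin\theta_0)$ for some $\theta_0\in [0,2\pi)$. Set
$$F(t,\theta) = f(t,\cos\theta,\sin\theta) = p(t) + K\cos(\alpha-n\theta),$$
with $p(t)=t^n+\sum_{r=1}^{\floor*{\frac{n}{2}}} c_r t^{n-2r}$, $K=\sqrt{c_0^2+\tilde{c_0}^2}$, and $\alpha$ defined by $\cos\alpha=c_0/K$, $\sin\alpha=\tilde{c_0}/K$ as in the proof of Proposition~\ref{allrealroots}.

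The singularity forces all four conditions $f=\partial_t f=\partial_x f=\partial_y f=0$ at $(t_0,x_0,y_0)$. The first two conditions read $F(t_0,\theta_0)=0$ and $\partial_t F(t_0,\theta_0)=0$, so $t_0$ is a \emph{repeated} root of the univariate polynomial $F(\,\cdot\,,\theta_0)=p(t)+K\cos(\alpha-n\theta_0)$. To finish, I must show $\cos(\alpha-n\theta_0)=\pm 1$, so that $F(\,\cdot\,,\theta_0)$ is literally one of $p(t)\pm K$. For this I would use the remaining two conditions $\partial_x f=\partial_y f=0$, which combine via the chain rule into the angular derivative
$$\partial_\theta F(t_0,\theta_0) \;=\; -\sin\theta_0\,\partial_x f(t_0,x_0,y_0) + \cos\theta_0\,\partial_y f(t_0,x_0,y_0) \;=\; 0.$$
Comparing this with the direct computation $\partial_\theta F(t,\theta)=nK\sin(\alpha-n\theta)$ (which, crucially, is independent of $t$) yields $nK\sin(\alpha-n\theta_0)=0$. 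If $K>0$, this forces $\sin(\alpha-n\theta_0)=0$, hence $\cos(\alpha-n\theta_0)=\pm 1$, and $F(\,\cdot\,,\theta_0)=p(t)\pm K$ has the repeated root $t_0$. In the degenerate case $K=0$, the two polynomials $p(t)\pm K$ are both equal to $F(\,\cdot\,,\theta_0)$, which already has a repeated root at $t_0$.

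There is essentially no serious obstacle here: the key move is to recognize that the two \emph{tangential} singular-point equations (in $x$ and $y$) collapse into a single equation for $\theta$-differentiation of $F$, which pins down the phase $\alpha-n\theta_0$ modulo $\pi$, while the two \emph{radial} equations (in the value and in $\partial_t$) produce the repeated root. One should just be careful to treat the case $K=0$ (i.e.\ $c_0=\tilde{c_0}=0$) separately, since the phase $\alpha$ is not defined there.
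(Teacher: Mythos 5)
Your proof is correct, and it takes a genuinely different route from the paper's. The paper argues by contraposition: assuming both $p(t)\pm\sqrt{c_0^2+\tilde c_0^2}$ have distinct real roots, it invokes Lemma~\ref{realroots} to conclude that $f(t,\cos\theta,\sin\theta)$ has distinct roots for \emph{every} $\theta$, hence $\mathcal{V}_{\mathbb{R}}\big(f,\tfrac{\partial f}{\partial t}\big)=\emptyset$, which contains the real singular locus. Your argument is direct and local: starting from a real singular point, you normalize to $(x_0,y_0)=(\cos\theta_0,\sin\theta_0)$, observe that the first two vanishing-gradient conditions already make $t_0$ a repeated root of $F(\cdot,\theta_0)=p(t)+K\cos(\alpha-n\theta_0)$, and then use the two remaining conditions via the chain rule to get $\partial_\theta F(t_0,\theta_0)=0$. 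The decisive point you identify --- that $\partial_\theta F=nK\sin(\alpha-n\theta)$ is independent of $t$, a structural consequence of the cyclic invariance (the $t$-dependence and $\theta$-dependence of $F$ decouple) --- pins the phase to $\cos(\alpha-n\theta_0)=\pm1$. Your route avoids Lemma~\ref{realroots} entirely, and it yields strictly more information: it tells you \emph{which} of the two univariate polynomials carries the repeated root, namely the one determined by the sign of $\cos(\alpha-n\theta_0)$. The paper's contrapositive is shorter given that Lemma~\ref{realroots} is already in hand, but yours makes the role of the cyclic symmetry more transparent. Your handling of the degenerate case $K=0$ is also correct.
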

    
    \begin{proof}
        Suppose $t^n + \sum_{r=1}^{\floor{\frac{n}{2}}} c_r t^{n-2r} \pm \sqrt{c_0^2+\tilde{c_0}^2}$ have all distinct real roots. By Lemma \ref{realroots}, this implies $f\left(t,\cos(\theta),\sin(\theta)\right)$ has all distinct real roots for every $\theta \in [0, 2\pi)$. Therefore $f\left(t,\cos(\theta),\sin(\theta)\right)$ and $\frac{\partial f}{\partial t}\left(t,\cos(\theta),\sin(\theta)\right)$ have no common roots in $t$. This holds for every $\theta \in [0, 2\pi)$, so $f\left(t,x,y\right)$ and $\frac{\partial f}{\partial t}\left(t,x,y\right)$ have no real intersection points. In other words, $\mathcal{V}_{\mathbb{R}}\left(f,\frac{\partial f}{\partial t}\right)=\emptyset$, but $$\mathcal{V}_{\mathbb{R}}\left(f,\frac{\partial f}{\partial t},\frac{\partial f}{\partial x},\frac{\partial f}{\partial y}\right) \subseteq \mathcal{V}_{\mathbb{R}}\left(f,\frac{\partial f}{\partial t}\right)\text{.}$$ Therefore, $f$ has no real singularities. 
    \end{proof}
    Equivalently, if neither of these univariate polynomials have a repeated root, then $\mathcal{V}(f)$ has no real singularities. Now we may prove the remaining singular case of Theorem \ref{main}(a) and again use the equivalent hyperbolicity condition in (\ref{hyp}) to our advantage.

    \begin{proof}[Proof of Theorem \ref{main}(a) (Singular case)]
         We dealt with the case $\mathcal{V}_{\C}(f)$ smooth in Section \ref{smooth}. Suppose $\mathcal{V}_{\C}(f)$ has a singularity and at least one of $c_0, \tilde{c_0}$ are nonzero. Write 
         $$f = t^n + \sum_{r=1}^{\floor*{\frac{n}{2}}} c_r t^{n-2r}(uv)^r + c_0 \left(\frac{u^n+v^n}{2}\right)+\tilde{c_0}\left(\frac{u^n-v^n}{2i}\right)$$ as in (\ref{fInvz}). Let $p(t)=t^n + \sum_{r=1}^{\floor*{\frac{n}{2}}} c_r t^{n-2r}$ and $s = \sqrt{c_0^2+\tilde{c_0}^2}$. Then by Lemma \ref{allrealroots} and Lemma \ref{multipleroots}, each of $p(t) \pm s$ has all real roots and at least one has a repeated root. Either $s \neq 0$ or $s = 0$.  If $s \neq 0$ (at least one of $c_0, \tilde{c_0}$ are nonzero), define $$f_\varepsilon(t,u,v)= t^n + \sum_{r=1}^{\floor*{\frac{n}{2}}} c_r t^{n-2r} (uv)^{r} +(c_0-\text{sign}(c_0)\varepsilon) \left(\frac{u^n+v^n}{2}\right)+(\tilde{c_0}-\text{sign}(\tilde{c_0})\varepsilon)\left(\frac{u^n-v^n}{2i}\right)\text{,}$$ so $ \lim\limits_{\varepsilon \to 0} f_\varepsilon =f$ and  by Lemma \ref{realroots}, $f_\varepsilon(t,e^{i\theta},e^{-i\theta})=p(t)+s_\varepsilon$ has all real distinct roots for $$s_{\varepsilon}=\sqrt{(c_0-\text{sign}(c_0)\varepsilon)^2+(\tilde{c_0}-\text{sign}(\tilde{c_0})\varepsilon)^2}\text{.}$$ 
         If $s=0$, perturb the nonzero coefficients of the univariate polynomial $p(t)$ to get $p_{\varepsilon}(t)$ with all real distinct roots and $ \lim\limits_{\varepsilon \to 0} p_\varepsilon =p$. Let $p_{\varepsilon}(t)^{\text{hom}}$ be the homogenization of $p_{\varepsilon}(t)$ with respect to $uv$ and define $$f_{\varepsilon}(t,u,v) = p_{\varepsilon}(t)^{\text{hom}} + \varepsilon\left( \frac{u^n+v^n}{2}\right)\text{.}$$ Then by Proposition \ref{realroots}, $f_{\varepsilon}(t,e^{i\theta},e^{-i\theta}) = p_{\varepsilon}(t) + \varepsilon \cos(n \theta)$ has all real distinct roots for every $\theta \in [0, 2\pi)$.
         In either case, this means $\mathcal{V}_{\C}\left(f_\varepsilon \right)$ is smooth and for every $\varepsilon >0$ there exists cyclic weighted shift matrix $W_\varepsilon \in \mathbb{C}^{n \times n}$ such that $f_\varepsilon = f_{W_{\varepsilon}}$ by Lemma \ref{multipleroots}. Now $f_{\varepsilon}(t,-1,-1)$ and $f_{\varepsilon}(t,-i,i)$ are the characteristic polynomials of $\Re(W_\varepsilon)$ and $\Im(W_\varepsilon)$ and converge to the roots of $f(t,-1,-1)$ or $f(t,-i,i)$ respectively. Therefore, the eigenvalues of $\Re(W_\varepsilon)$ and $\Im(W_\varepsilon)$ are bounded, which bounds the sequences $(\Re(W_\varepsilon))_\varepsilon$ and $(\Im(W_\varepsilon))_\varepsilon$. Then
         $$ (\Re(W_\varepsilon))_\varepsilon + i(\Im(W_\varepsilon))_\varepsilon= (\Re(W_\varepsilon) + i\Im(W_\varepsilon))_\varepsilon= (W_\varepsilon)_\varepsilon$$ which is also bounded. By passing to a convergent subsequence, this means $\lim \limits_{\varepsilon \to 0}(W_\varepsilon)_\varepsilon = W$ and $$f=\det\left(\lim \limits_{\varepsilon \to 0}\left(tI_n + \frac{u}{2}W_\varepsilon^{\ast}+\frac{v}{2}W_\varepsilon\right)\right) = \det\left(tI_n + \frac{u}{2}W^{\ast}+\frac{v}{2}W\right)\text{.}$$
    \end{proof}

    This proof provides an analogue to a result of Nuij who showed the space of smooth (homogeneous) hyperbolic polynomials is dense in the space of all (homogeneous) hyperbolic polynomials \cite{nuij}. More specifically, since every $f \in \R[t,x,y]^{C_n}_n$ hyperbolic with respect to $(1,0,0)$ where $\mathcal{V}_{\R}(f) \neq \emptyset$ is the limit of $f' \in \R[t,x,y]^{C_n}_n$ hyperbolic with respect to $(1,0,0)$ where $\mathcal{V}_{\C}(f')$ smooth, we have the following corollary.
    \begin{corollary} The space of smooth, hyperbolic forms of degree $n$ invariant under $C_n$ is dense in the space of all hyperbolic forms of degree $n$ invariant under $C_n$.
    \end{corollary}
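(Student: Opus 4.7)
The plan is to exhibit an explicit approximating sequence, reusing almost verbatim the perturbation constructed in the singular-case proof of Theorem~\ref{main}(a). Given any hyperbolic $f \in \R[t,x,y]^{C_n}_n$ with $f(1,0,0)=1$, I would produce a family $f_\varepsilon \in \R[t,x,y]^{C_n}_n$ whose coefficients converge to those of $f$ and whose complex zero locus is smooth for every $\varepsilon > 0$; the density statement then follows in the natural coefficient topology.

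Concretely, I would first write $f$ in the invariant form (\ref{fInvz}) with coefficients $c_r$, $c_0$, $\tilde{c_0}$, and set $s = \sqrt{c_0^2 + \tilde{c_0}^2}$. When $s \neq 0$, I define $f_\varepsilon$ by replacing $c_0, \tilde{c_0}$ with $c_0 - \mathrm{sign}(c_0)\varepsilon$ and $\tilde{c_0}-\mathrm{sign}(\tilde{c_0})\varepsilon$, exactly as in the proof above. When $s = 0$, I first perturb the coefficients of the univariate polynomial $p(t) = t^n + \sum_{r} c_r t^{n-2r}$ to obtain a $p_\varepsilon(t)$ with $n$ distinct real roots, and then set $f_\varepsilon = p_\varepsilon(t)^{\mathrm{hom}} + \varepsilon(u^n+v^n)/2$, again matching the construction already used. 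Since each perturbation alters only coefficients attached to the $C_n$-invariant generators $\beta_1,\beta_2,\beta_3,\beta_4$, the $C_n$-invariance of $f_\varepsilon$ is automatic, and $f_\varepsilon \to f$ coefficientwise.

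It then remains to check that each $f_\varepsilon$ is hyperbolic and smooth. Hyperbolicity follows from Proposition~\ref{allrealroots} together with Lemma~\ref{realroots}: in the first case $s_\varepsilon$ lies strictly between $-s$ and $s$, so $p(t) \pm s_\varepsilon$ interpolates between polynomials already known to be real rooted and is therefore itself real rooted; a parallel interpolation argument works in the second case using the perturbed $p_\varepsilon$. By construction these univariate polynomials have \emph{distinct} real roots, so the contrapositive of Lemma~\ref{multipleroots} rules out real singularities of $\mathcal{V}(f_\varepsilon)$, and since the perturbation keeps at least one of the coefficients of $u^n \pm v^n$ nonzero, Corollary~\ref{nosing} upgrades this to smoothness of $\mathcal{V}_\C(f_\varepsilon)$.

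The main obstacle is the degenerate subcase $s = 0$: there Lemma~\ref{realroots} cannot be applied directly to $p(t) \pm 0$, so one must first argue that a suitable small perturbation of the $c_r$ produces a $p_\varepsilon$ with $n$ distinct real roots while staying close to $p$ in coefficients. This is a purely univariate statement, essentially a Nuij-type density lemma for real rooted polynomials in one variable, which one can establish by an inductive argument on the multiplicities of the repeated roots of $p$. Once this is in hand, the desired density statement follows immediately, and in fact the argument recovers Nuij's theorem restricted to the $C_n$-invariant subspace of $\R[t,x,y]_n$.
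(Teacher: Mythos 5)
Your approach matches the paper's: both extract the density claim directly from the $\varepsilon$-perturbation $f_\varepsilon \to f$ already constructed in the singular-case proof of Theorem~\ref{main}(a), using the fact that each $f_\varepsilon$ is $C_n$-invariant, hyperbolic, and has smooth complex zero locus. You are, if anything, slightly more careful than the paper: you explicitly flag the $s=0$ subcase as needing a short univariate lemma (that a real-rooted polynomial of the special even/odd form $t^n+\sum_r c_r t^{n-2r}$ can be perturbed within that parity class to one with $n$ distinct real roots), a fact the paper invokes without comment.
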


\section{Dihedral Invariance}\label{dihedral}

Recall that in addition to rotation about the axis of $t$, the dihedral group of order $n$ is also generated by a reflection that maps the point $[t:u:v]$ to $[t:v:u]$. Since $\C[t,u,v]^{D_n}_n \subset \C[t,u,v]^{C_n}_n$ and the invariant $\frac{u^n-v^n}{2i}$ of $C_n$ does not remain the same under the action of reflection, polynomials with dihedral invariance have the form 
    \begin{equation} \label{dihedralf}
f(t,u,v) = t^n + \sum_{r=1}^{\floor*{\frac{n}{2}}} c_r t^{n-2r} (uv)^{r} +c_0 \left(\frac{u^n+v^n}{2}\right)\text{.}
    \end{equation} 
    Now we can prove Theorem \ref{main}(b) which gives a positive answer to Chien and Nakazato's main question in \cite{Chien2013}.

    \begin{proof}[Proof of Theorem \ref{main}(b).]
            Note $f \in \R[t,u,v]^{D_n} \subseteq \R[t,u,v]^{C_n}$, so by Theorem \ref{main}(a) there exists cyclic weighted shift matrix $A \in \C^{n \times n}$ where $f(t,u,v) = f_A(t,u,v) = \det\left(tI_n + \frac{u}{2}A^{\ast} + \frac{v}{2}A \right)$. The polynomial $f$ has the form (\ref{dihedralf}), so $$t^n + 2^{n-1}c_0 = f(t,0,2) = f_A(t,0,2) = \det(tI + A) = t^n - a_1 a_2 \cdots a_n\text{,}$$ which implies $a_1 a_2 \cdots a_n \in \R$ since $c_0 \in \R$. Write $a_j = r_je^{i \alpha_j}$ with $r_j \in \R$ for each $j$, so $a_1 a_2 \cdots a_n \in \R$ and $\alpha_1 + \alpha_2 + \ldots + \alpha_n = 0$. Let $U = \text{diag}\left(e^{i\theta_1},e^{i\theta_2},\ldots, e^{i\theta_n}\right)$ for some $\theta_i \in [0, 2\pi)$. Since $UU^{\ast} = I$, we say $A$ is unitarily equivalent to $B=UAU^{\ast}$, so $f = f_A = f_B$. The matrix $B$ is the cylic weighted shift matrix $B=S\left(r_1e^{i (\theta_1-\theta_2+\alpha_1)},r_2e^{i (\theta_2-\theta_3+\alpha_2)}, \ldots, r_ne^{i (\theta_n-\theta_1+\alpha_n)}\right)$. Choose 
            \begin{align*}
                \theta_j &= -\alpha_j - \alpha_{j+1} - \ldots - \alpha_{n-1} \text{ for } 1 \leq j < n \\
                \theta_n &= -\alpha_1 - \alpha_2 - \ldots - \alpha_n = 0\text{.}
            \end{align*} This gives $\theta_{j} - \theta_{j+1} = -\alpha_j$ for $1 \leq j < n$ and $\theta_n - \theta_1 = -\alpha_n$, so $B = S(r_1, \ldots, r_n) \in \R^{n \times n}$.
    \end{proof}

\begin{example}[Quartic]
    \rm We will compute a determinantal representation of $$f(t,u,v)=t^4-26t^2uv+72(uv)^2-72\left(\frac{u^4+v^4}{2}\right)$$ and identify the associated cyclic weighted shift matrix. Let $g_{11} = \frac{\partial f}{\partial t} = 4t^3 -52tuv$ and compute the points of $\mathcal{V}_{\C}(f,g)$. Split the points into $S\cup\overline{S}$ so $$\tilde{S} = \left\{ \left[1:\frac{1+i}{\sqrt{39}}: \frac{\sqrt{3}}{2\sqrt{13}}(1-i)\right], \left[0:1:1\right] \right\}$$ is the set of orbit representatives of $S$ as in (\ref{Stilde}). Choose $g_{12} = -4t^2v-36u^3+36uv^2$ which lies in $\Equartic{\omega^{-1}}$ and vanishes on $S$. Now for $j \leq 4$, choose $g_{1j} \in \Equartic{\omega^{1-j}}$ so $g_{1j}$ vanishes on the points of $S$ and set $g_{j1}=\overline{g_{1j}}$. Write $$g_{12}g_{21} = \left(-4t^2+36uv\right)f + \left(t^3-18tuv\right)g_{11} \in \langle f,g_{11} \rangle$$ and let $g_{22} = t^3-18tuv \in \Equartic{1}$. For every other $i\leq j$, write $g_{1i}g_{j1} = af + bg_{11}$ for $a,b \in \C[t,u,v]$ where $b \in \Equartic{\omega^{i-j}}$ and set $g_{ij}=b, g_{ji}=\overline{b}$. Denote $G$ the matrix with $g_{ij}$ entries, take the adjugate of $G$ to get a matrix with entries in $\C[t,u,v]_{9}$, and divide each entry by $f^2$. One of the resulting representations is $1728 \cdot f(t,u,v) = \det(M)$ where
 $$M = \left( \begin{matrix}
3t      &  12v     & 0           & (9+9\sqrt{3}i) u      \vspace{1mm}\\
12 u      & 12 t       & \big(6\sqrt{2}(1-i)+2\sqrt{6}(1+i)\big) v    & 0         \vspace{1mm} \\
0 & \big(6\sqrt{2}(1+i)+2\sqrt{6}(1-i)\big) u & 4 t & 6\sqrt{6}(1+i) v  \vspace{1mm}\\
\big(9-9\sqrt{3}i\big)v & 0 & 6\sqrt{6}(1-i) u & 12 t \vspace{1mm}\\
\end{matrix} \right) \text{.}$$
Finally, let $D = \text{diag}\left(\frac{1}{\sqrt{3}},\frac{1}{\sqrt{12}},\frac{1}{2},\frac{1}{\sqrt{12}}\right)$, and normalize $M$ so that 
    $$f(t,u,v) = \det(DMD) = \det\left(tI_4 + uA^{\ast}+vA \right)$$ where 
    $$ A = S\left(1,\sqrt{2}\big(1-i\big)+\sqrt{6}\big(1+i\big),3\sqrt{2}\big(1+i\big),3+3\sqrt{3}i\right)=S(4, 4e^{\frac{\pi i}{12}}, 6e^{\frac{\pi i}{4}},6e^{\frac{-\pi i}{3}})$$ is the associated cyclic weighted shift matrix. Let $U = \text{diag}\big(e^{\frac{-\pi i}{3}},e^{\frac{-\pi i}{3}},e^{\frac{-\pi i}{4}},1\big)$. Then $A$ is unitarily equivalent to $B = UAU^{\ast} = S(4,4,6,6)$ with all real entries. 
\end{example}

\section{Conclusion} 
By properly modifying a construction of Dixon \cite{Dixon}, we have shown that each polynomial $f$ that is hyperbolic with respect to $(1,0,0)$ with cyclic invariance always has a determinantal representation via some cyclic weighted shift matrix. If the polynomial has dihedral invariance, we have shown one can find a determinantal representation associated to a cyclic weighted shift matrix with real entries. One can find such representations using computer algebra systems, but the construction quickly gets more difficult as $n$ grows larger.
 In practice, computing these constructions is not so easy. The difficulty lies in the computation of intersection points of $f$ and its partial derivative. We would like to implement an algorithm to output cyclic weighted shift determinantal representations given a polynomial with the appropriate properties. Additionally, we'd like to consider the action of other finite groups on polynomials of varying degree and associated determinantal representations.

\section*{Acknowledgments}
We would like to thank Daniel Plaumann and Cynthia Vinzant for many helpful discussions. The second author received support from the National Science Foundation (DMS-1620014). 

\bibliography{mybibfile}

\end{document}